\newtheorem{theorem}{Theorem}
\theoremstyle{plain}
\newtheorem{corollary}{Corollary}
\newtheorem{definition}{Definition}
\newtheorem{lemma}{Lemma}
\numberwithin{equation}{section}
\begin{document}
\title[Reverses and Refinements of Jensen's Inequality]{Reverses and
Refinements of Jensen's Inequality for Positive Linear Functionals on
Hermitian Unital Banach $\ast $-Algebras }
\author{S. S. Dragomir$^{1,2}$}
\address{$^{1}$Mathematics, College of Engineering \& Science\\
Victoria University, PO Box 14428\\
Melbourne City, MC 8001, Australia.}
\email{sever.dragomir@vu.edu.au}
\urladdr{http://rgmia.org/dragomir}
\address{$^{2}$DST-NRF Centre of Excellence \\
in the Mathematical and Statistical Sciences, School of Computer Science \&
Applied Mathematics, University of the Witwatersrand, Private Bag 3,
Johannesburg 2050, South Africa}
\subjclass{47A63, 47A30, 15A60, 26D15, 26D10}
\keywords{Hermitian unital Banach $\ast $-algebra, Positive linear
functionals, Jensen's and Slater's type inequalities, Inequalities for power
and logarithmic functions}

\begin{abstract}
We establish in this paper some inequalities for analytic and convex
functions on an open interval and positive normalized functionals defined on
a Hermitian unital Banach $\ast $-algebra. Reverses and refinements of
Jensen's and Slater's type inequalities are provided. Some examples for
particular convex functions of interest are given as well.
\end{abstract}

\maketitle

\section{Introduction}

We need some preliminary concepts and facts about Banach $\ast $-algebras.

Let $A$ be a unital Banach $\ast $-algebra with unit $1$. An element $a\in A$
is called \textit{selfadjoint} if $a^{\ast }=a.$ $A$ is called \textit{%
Hermitian} if every selfadjoint element $a$ in $A$ has real \textit{spectrum}
$\sigma \left( a\right) ,$ namely $\sigma \left( a\right) \subset \mathbb{R}$%
.

We say that an element $a$ is \textit{nonnegative} and write this as $a\geq
0 $ if $a^{\ast }=a$ and $\sigma \left( a\right) \subset \left[ 0,\infty
\right) .$ We say that $a$ is \textit{positive }and write $a>0$ if $a\geq 0$
and $0\notin \sigma \left( a\right) .$ Thus $a>0$ implies that its inverse $%
a^{-1}$ exists. Denote the set of all invertible elements of $A$ by $%
\limfunc{Inv}\left( A\right) .$ If $a,$ $b\in \limfunc{Inv}\left( A\right) ,$
then $ab\in \limfunc{Inv}\left( A\right) $ and $\left( ab\right)
^{-1}=b^{-1}a^{-1}.$ Also, saying that $a\geq b$ means that $a-b\geq 0$ and,
similarly $a>b$ means that $a-b>0.$

The \textit{Shirali-Ford theorem} asserts that if $A$ is a unital Banach $%
\ast $-algebra \cite{SF} (see also \cite[Theorem 41.5]{BD}), then%
\begin{equation}
\left\vert a\right\vert ^{2}:=a^{\ast }a\geq 0\text{ for every }a\in A. 
\tag{SF}  \label{SF}
\end{equation}%
Based on this fact, Okayasu \cite{O}, Tanahashi and Uchiyama \cite{TU}
proved the following fundamental properties (see also \cite{F}):

\begin{enumerate}
\item[(i)] If $a,$ $b\in A,$ then $a\geq 0,$ $b\geq 0$ imply $a+b\geq 0$ and 
$\alpha \geq 0$ implies $\alpha a\geq 0;$

\item[(ii)] If $a,$ $b\in A,$ then $a>0,$ $b\geq 0$ imply $a+b>0;$

\item[(iii)] If $a,$ $b\in A,$ then either $a\geq b>0$ or $a>b\geq 0$ imply $%
a>0;$

\item[(iv)] If $a>0,$ then $a^{-1}>0;$

\item[(v)] If $c>0,$ then $0<b<a$ if and only if $cbc<cac,$ also $0<b\leq a$
if and only if $cbc\leq cac;$

\item[(vi)] If $0<a<1,$ then $1<a^{-1};$

\item[(vii)] If $0<b<a,$ then $0<a^{-1}<b^{-1},$ also if $0<b\leq a,$ then $%
0<a^{-1}\leq b^{-1}.$
\end{enumerate}

In order to introduce the real power of a positive element, we need the
following facts \cite[Theorem 41.5]{BD}.

Let $a\in A$ and $a>0,$ then $0\notin \sigma \left( a\right) $ and the fact
that $\sigma \left( a\right) $ is a compact subset of $\mathbb{C}$ implies
that $\inf \{z:z\in \sigma \left( a\right) \}>0$ and $\sup \{z:z\in \sigma
\left( a\right) \}<\infty .$ Choose $\gamma $ to be close rectifiable curve
in $\{\func{Re}z>0\},$ the right half open plane of the complex plane, such
that $\sigma \left( a\right) \subset \limfunc{ins}\left( \gamma \right) ,$
the inside of $\gamma .$ Let $G$ be an open subset of $\mathbb{C}$ with $%
\sigma \left( a\right) \subset G.$ If $f:G\rightarrow \mathbb{C}$ is
analytic, we define an element $f\left( a\right) $ in $A$ by 
\begin{equation*}
f\left( a\right) :=\frac{1}{2\pi i}\int_{\gamma }f\left( z\right) \left(
z-a\right) ^{-1}dz.
\end{equation*}%
It is well known (see for instance \cite[pp. 201-204]{C}) that $f\left(
a\right) $ does not depend on the choice of $\gamma $ and the Spectral
Mapping Theorem (SMT) 
\begin{equation*}
\sigma \left( f\left( a\right) \right) =f\left( \sigma \left( a\right)
\right)
\end{equation*}
holds.

For any $\alpha \in \mathbb{R}$ we define for $a\in A$ and $a>0,$ the real
power 
\begin{equation*}
a^{\alpha }:=\frac{1}{2\pi i}\int_{\gamma }z^{\alpha }\left( z-a\right)
^{-1}dz,
\end{equation*}%
where $z^{\alpha }$ is the principal $\alpha $-power of $z.$ Since $A$ is a
Banach $\ast $-algebra, then $a^{\alpha }\in A.$ Moreover, since $z^{\alpha
} $ is analytic in $\{\func{Re}z>0\},$ then by (SMT) we have%
\begin{equation*}
\sigma \left( a^{\alpha }\right) =\left( \sigma \left( a\right) \right)
^{\alpha }=\{z^{\alpha }:z\in \sigma \left( a\right) \}\subset \left(
0,\infty \right) .
\end{equation*}

Following \cite{F}, we list below some important properties of real powers:

\begin{enumerate}
\item[(viii)] If $0<a\in A$ and $\alpha \in \mathbb{R}$, then $a^{\alpha
}\in A$ with $a^{\alpha }>0$ and $\left( a^{2}\right) ^{1/2}=a,$ \cite[Lemma
6]{TU};

\item[(ix)] If $0<a\in A$ and $\alpha ,$ $\beta \in \mathbb{R}$, then $%
a^{\alpha }a^{\beta }=a^{\alpha +\beta };$

\item[(x)] If $0<a\in A$ and $\alpha \in \mathbb{R}$, then $\left( a^{\alpha
}\right) ^{-1}=\left( a^{-1}\right) ^{\alpha }=a^{-\alpha };$

\item[(xi)] If $0<a,$ $b\in A$, $\alpha ,$ $\beta \in \mathbb{R}$ and $%
ab=ba, $ then $a^{\alpha }b^{\beta }=b^{\beta }a^{\alpha }.$
\end{enumerate}

Okayasu \cite{O} showed that the \textit{L\"{o}wner-Heinz inequality}
remains valid in a Hermitian unital Banach $\ast $-algebra with continuous
involution, namely if $a,$ $b\in A$ and $p\in \left[ 0,1\right] $ then $a>b$ 
$\left( a\geq b\right) $ implies that $a^{p}>b^{p}$ $\left( a^{p}\geq
b^{p}\right) .$

Now, assume that $f\left( \cdot \right) $ is analytic in $G$, an open subset
of $\mathbb{C}$ and for the real interval $I\subset G$ assume that $f\left(
z\right) \geq 0$ for any $z\in I.$ If $u\in A$ such that $\sigma \left(
u\right) \subset I,$ then by (SMT) we have%
\begin{equation*}
\sigma \left( f\left( u\right) \right) =f\left( \sigma \left( u\right)
\right) \subset f\left( I\right) \subset \left[ 0,\infty \right)
\end{equation*}%
meaning that $f\left( u\right) \geq 0$ in the order of $A.$

Therefore, we can state the following fact that will be used to establish
various inequalities in $A,$ see also \cite{SSDQM}.

\begin{lemma}
\label{l.1.2}Let $f\left( z\right) $ and $g\left( z\right) $ be analytic in $%
G$, an open subset of $\mathbb{C}$ and for the real interval $I\subset G,$
assume that $f\left( z\right) \geq g\left( z\right) $ for any $z\in I.$ Then
for any $u\in A$ with $\sigma \left( u\right) \subset I$ we have $f\left(
u\right) \geq g\left( u\right) $ in the order of $A.$
\end{lemma}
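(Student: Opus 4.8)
The plan is to reduce the assertion to the fact established in the paragraph immediately preceding the lemma, applied to the single analytic function
\begin{equation*}
h\left( z\right) :=f\left( z\right) -g\left( z\right) .
\end{equation*}
First I would observe that $h$ is analytic on $G$, being a difference of functions analytic on $G$, and that the hypothesis $f\left( z\right) \geq g\left( z\right) $ for $z\in I$ is literally the statement $h\left( z\right) \geq 0$ for every $z\in I$. Thus $h$ satisfies exactly the hypotheses of the preceding displayed fact.

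The one step that requires justification is the additivity of the functional calculus, namely $h\left( u\right) =f\left( u\right) -g\left( u\right) $. Since $\sigma \left( u\right) \subset I\subset G$ is compact, I may choose a closed rectifiable curve $\gamma \subset G$ with $\sigma \left( u\right) \subset \limfunc{ins}\left( \gamma \right) $, and because $f\left( u\right) $ does not depend on the choice of $\gamma $, the \emph{same} $\gamma $ may be used for $f$, $g$, and $h$ simultaneously. Linearity of the Riesz--Dunford integral then gives
\begin{equation*}
h\left( u\right) =\frac{1}{2\pi i}\int_{\gamma }\left( f\left( z\right) -g\left( z\right) \right) \left( z-u\right) ^{-1}dz=f\left( u\right) -g\left( u\right) ,
\end{equation*}
so that the desired conclusion $f\left( u\right) \geq g\left( u\right) $ is equivalent to $h\left( u\right) \geq 0$ in the order of $A$.

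Finally I would apply the fact preceding the lemma to $h$: since $h$ is analytic on $G$ and $h\left( z\right) \geq 0$ on $I\supseteq \sigma \left( u\right) $, the Spectral Mapping Theorem yields $\sigma \left( h\left( u\right) \right) =h\left( \sigma \left( u\right) \right) \subset h\left( I\right) \subset \left[ 0,\infty \right) $, whence $h\left( u\right) \geq 0$. Combining this with the additivity identity gives $f\left( u\right) -g\left( u\right) \geq 0$, i.e. $f\left( u\right) \geq g\left( u\right) $, as claimed. The selfadjointness implicitly demanded by the relation $\geq 0$ is inherited exactly as in the discussion preceding the lemma, so the only genuinely nontrivial point is the additivity reduction $h\left( u\right) =f\left( u\right) -g\left( u\right) $; everything else is a direct invocation of the Spectral Mapping Theorem.
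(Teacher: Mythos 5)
Your argument is correct and is essentially the implicit proof the paper intends: the lemma is stated immediately after the observation that analyticity plus nonnegativity on $I\supseteq\sigma\left(u\right)$ gives $f\left(u\right)\geq 0$ via the Spectral Mapping Theorem, and the lemma follows by applying that observation to $h=f-g$. Your additional justification of $h\left(u\right)=f\left(u\right)-g\left(u\right)$ by linearity of the Riesz--Dunford integral over a common contour is a detail the paper leaves tacit, but it is the right one and does not change the route.
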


\begin{definition}
\label{d.2.1}Assume that $A$ is a Hermitian unital Banach $\ast $-algebra. A
linear functional $\psi :A\rightarrow \mathbb{C}$ is positive if for $a\geq
0 $ we have $\psi \left( a\right) \geq 0.$ We say that it is normalized if $%
\psi \left( 1\right) =1.$
\end{definition}

We observe that the positive linear functional $\psi $ preserves the order
relation, namely if $a\geq b$ then $\psi \left( a\right) \geq \psi \left(
b\right) $ and if $\beta \geq a\geq \alpha $ with $\alpha ,$ $\beta $ real
numbers, then $\beta \geq \psi \left( a\right) \geq \alpha .$

In the recent paper \cite{SSDMcC}\ we established the following McCarthy
type inequality:

\begin{theorem}
\label{t.A}Assume that $A$ is a Hermitian unital Banach $\ast $-algebra and $%
\psi :A\rightarrow \mathbb{C}$ a positive normalized linear functional on $%
A. $

(i) If $p\in \left( 0,1\right) $ and $a\geq 0,$ then%
\begin{equation}
\psi ^{p}\left( a\right) \geq \psi \left( a^{p}\right) \geq 0;  \label{e.1.1}
\end{equation}

(ii) If $q\geq 1$ and $b\geq 0,$ then 
\begin{equation}
\psi \left( b^{q}\right) \geq \psi ^{q}\left( b\right) \geq 0;  \label{e.1.2}
\end{equation}%
(iii) If $r<0,$ $c>0$ with $\psi \left( c\right) >0,$ then 
\begin{equation}
\psi \left( c^{r}\right) \geq \psi ^{r}\left( c\right) >0.  \label{e.1.3}
\end{equation}
\end{theorem}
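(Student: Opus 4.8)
The plan is to derive all three inequalities from one principle: the supporting--line inequality for the power function $t\mapsto t^{s}$ on $(0,\infty)$, transported from the real line to $A$ by Lemma \ref{l.1.2} and then tested against $\psi$. The only analytic input is the elementary fact that $t\mapsto t^{s}$ is concave on $(0,\infty)$ when $s\in(0,1)$ and convex when $s\geq 1$ or $s<0$; part (i) is the concave instance and parts (ii)--(iii) the convex ones, so the three statements are really one Jensen--type estimate for power functions of differing convexity.

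First I would dispose of the strictly positive case. If $x>0$ then $\sigma(x)\subset(0,\infty)$ is compact, so $\alpha:=\inf\sigma(x)>0$ and $\sigma(x-\alpha 1)\subset[0,\infty)$ gives $x\geq\alpha 1$; since $\psi$ is positive and normalized, $\lambda:=\psi(x)\geq\alpha>0$, placing the base point $\lambda$ in the open right half--plane where $z^{s}$ is analytic. For $s=p\in(0,1)$ concavity yields the scalar tangent estimate
\begin{equation*}
z^{p}\leq\lambda^{p}+p\lambda^{p-1}\left( z-\lambda\right),\qquad z\in(0,\infty),
\end{equation*}
a pointwise inequality between two functions analytic on an open right half--plane containing $\sigma(x)$ (the affine majorant being entire). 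Lemma \ref{l.1.2} then lifts it to $x^{p}\leq\lambda^{p}1+p\lambda^{p-1}\left( x-\lambda 1\right)$ in the order of $A$, and applying $\psi$, using linearity, $\psi(1)=1$ and $\psi(x)=\lambda$, collapses the affine term to give $\psi\left( x^{p}\right)\leq\lambda^{p}=\psi^{p}(x)$. Finally $x^{p}\geq 0$ by property (viii), whence $\psi\left( x^{p}\right)\geq 0$; this is \eqref{e.1.1} for $x>0$.

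The convex cases are identical up to the reversal of the tangent inequality to $z^{s}\geq\mu^{s}+s\mu^{s-1}\left( z-\mu\right)$ with $\mu:=\psi(x)$. For $s=q\geq 1$ this gives $\psi\left( x^{q}\right)\geq\mu^{q}=\psi^{q}(x)$, which is \eqref{e.1.2}; for $s=r<0$ it gives $\psi\left( c^{r}\right)\geq\nu^{r}=\psi^{r}(c)$ with $\nu:=\psi(c)$, which is \eqref{e.1.3}. Here the hypothesis $c>0$ already forces $\nu>0$ by the argument of the previous paragraph, so $\nu^{r}$ is well defined and strictly positive, while for $q\geq 1$ one has $\psi^{q}(x)\geq 0$ from $\psi(x)\geq 0$. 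Thus \eqref{e.1.3} is complete and \eqref{e.1.2} holds for $b>0$.

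It remains to relax $x>0$ to $x\geq 0$ in (i) and (ii), and this limiting step is the one delicate point. For $\varepsilon>0$ the element $x+\varepsilon 1$ is positive, so the cases just proved apply to it; as $\varepsilon\to 0^{+}$ one has $\psi(x+\varepsilon 1)=\psi(x)+\varepsilon\to\psi(x)$, so the side of each inequality built from $\psi(x)$ converges correctly. The point needing care is the convergence of the other side: one must know that $\left( x+\varepsilon 1\right)^{s}\to x^{s}$ as the base point descends to the branch point $0$, together with continuity of the positive functional $\psi$. Granting the continuity of the functional calculus there and of $\psi$, the inequalities pass to the limit and deliver \eqref{e.1.1} and \eqref{e.1.2} for $x\geq 0$, which completes the proof.
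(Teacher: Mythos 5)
A preliminary remark: the paper you were given does not prove Theorem \ref{t.A} at all; it imports it verbatim from the earlier paper \cite{SSDMcC} (``In the recent paper \cite{SSDMcC} we established the following McCarthy type inequality''). So there is no in-text proof to measure yours against, and your argument has to be judged on its own. On those terms, your tangent-line strategy is the natural one and is the same mechanism the present paper uses everywhere else (a scalar convexity inequality, lifted to $A$ by Lemma \ref{l.1.2}, then hit with $\psi$). For $a>0$ your argument is complete and correct: $\psi(a)\geq\inf\sigma(a)>0$, the supporting-line inequality for $z^{s}$ at $\lambda=\psi(a)$ is a genuine inequality between functions analytic on the open right half-plane containing $\sigma(a)$, Lemma \ref{l.1.2} applies, and the affine term vanishes under $\psi$. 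Part (iii) needs nothing more, since there $c>0$ is hypothesized.

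The genuine gap is the step you yourself flag and then wave through: the passage from $a>0$ to $a\geq 0$ in (i) and (ii). In the framework of this paper, $a^{p}$ is \emph{only defined for} $a>0$, via the holomorphic calculus on a contour in $\{\operatorname{Re}z>0\}$; for $a\geq 0$ with $0\in\sigma(a)$ the function $z^{p}$ is not analytic on any open set containing $\sigma(a)$, so the object $\psi(a^{p})$ in \eqref{e.1.1}--\eqref{e.1.2} is not yet meaningful without an additional definition (e.g.\ as $\lim_{\varepsilon\to 0^{+}}(a+\varepsilon 1)^{p}$). Granting that definition, your limit argument still needs two facts you only ``grant'': continuity of $\psi$ (this one is fine --- in a Hermitian unital Banach $\ast$-algebra a positive functional satisfies $|\psi(a)|^{2}\leq\psi(1)\psi(a^{\ast}a)$ and $a^{\ast}a\leq r(a^{\ast}a)1$, so it is bounded) and norm-convergence of $(a+\varepsilon 1)^{p}$ as $\varepsilon\to 0^{+}$. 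The latter is \emph{not} automatic here the way it is in a $C^{\ast}$-algebra: spectral estimates control only the spectral radius of $(a+\varepsilon 1)^{p}-(a+\delta 1)^{p}$, and in a general Banach $\ast$-algebra the spectral radius is dominated by, but not equivalent to, the norm, so ``Cauchy in spectral radius'' does not give convergence. This is exactly the technical content that must live in \cite{SSDMcC} (or be supplied separately), and your proof as written does not supply it. Everything else --- the direction of the tangent inequalities, the concave/convex split by the value of $s$, the positivity of the lower bounds --- is correct.
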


In \cite{SSDJen} and \cite{SSDGru} we obtained the following result for
analytic convex functions:

\begin{theorem}
\label{t.B}Let $f\left( z\right) $ be analytic in $G$, an open subset of $%
\mathbb{C}$ and the real interval $I\subset G.$ If $f$ is convex (in the
usual sense) on the interval $I$ and $\psi :A\rightarrow \mathbb{C}$ is a
positive normalized linear functional on $A,$ then for any selfadjoint
element $c\in A$ with with $\sigma \left( c\right) \subseteq \left[ m,M%
\right] \subset I$ for some real numbers $m<M,$ 
\begin{align}
0& \leq \psi \left( f\left( c\right) \right) -f\left( \psi \left( c\right)
\right) \leq \psi \left( cf^{\prime }\left( c\right) \right) -\psi \left(
c\right) \psi \left( f^{\prime }\left( c\right) \right)  \label{e.1.4} \\
&  \notag \\
& \leq \left\{ 
\begin{array}{l}
\frac{1}{2}\left( M-m\right) \left[ \psi \left( \left[ f^{\prime }\left(
c\right) \right] ^{2}\right) -\psi ^{2}\left( f^{\prime }\left( c\right)
\right) \right] ^{1/2} \\ 
\\ 
\frac{1}{2}\left[ f^{\prime }\left( M\right) -f^{\prime }\left( m\right) %
\right] \left( \psi \left( c^{2}\right) -\psi ^{2}\left( c\right) \right)
^{1/2}%
\end{array}%
\right.  \notag \\
&  \notag \\
& \leq \frac{1}{4}\left( M-m\right) \left[ f^{\prime }\left( M\right)
-f^{\prime }\left( m\right) \right] .  \notag
\end{align}
\end{theorem}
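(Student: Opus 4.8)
The plan is to reduce every link in the chain \eqref{e.1.4} to a scalar inequality valid on $\left[m,M\right]$ and then transport it into $A$ by Lemma~\ref{l.1.2} together with the order-preserving property of $\psi$. Write $\bar{c}:=\psi\left(c\right)$. Since $\sigma\left(c\right)\subseteq\left[m,M\right]$, Lemma~\ref{l.1.2} applied to the affine comparisons $z\geq m$ and $M\geq z$ on $\left[m,M\right]$ gives $m\cdot 1\leq c\leq M\cdot 1$, whence, by the order-preservation noted after Definition~\ref{d.2.1}, $m\leq\bar{c}\leq M$; thus $\bar{c}\in\left[m,M\right]\subset I$ and the tangent line of $f$ at $\bar{c}$ is available. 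Moreover, being convex and analytic on $I$, $f$ is real-valued there with nondecreasing $f'$, so $f'\left(c\right)$ is selfadjoint and by (SMT) $\sigma\left(f'\left(c\right)\right)=f'\left(\sigma\left(c\right)\right)\subseteq\left[f'\left(m\right),f'\left(M\right)\right]$.

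For the first two inequalities I use the two complementary forms of the gradient inequality for convex functions. Fixing $s=\bar{c}$, the support-line inequality $f\left(z\right)\geq f\left(\bar{c}\right)+f'\left(\bar{c}\right)\left(z-\bar{c}\right)$ holds for all $z\in I$; Lemma~\ref{l.1.2} and then $\psi$ give $\psi\left(f\left(c\right)\right)\geq f\left(\bar{c}\right)+f'\left(\bar{c}\right)\left(\psi\left(c\right)-\bar{c}\right)=f\left(\psi\left(c\right)\right)$, which is the leftmost inequality. For the second I use the dual form $f\left(z\right)-f\left(\bar{c}\right)\leq f'\left(z\right)\left(z-\bar{c}\right)$, also valid for all $z\in I$ by convexity; since $z\mapsto zf'\left(z\right)$ is analytic, Lemma~\ref{l.1.2} yields $f\left(c\right)\leq cf'\left(c\right)-\bar{c}f'\left(c\right)+f\left(\bar{c}\right)1$, and applying $\psi$ produces $\psi\left(f\left(c\right)\right)-f\left(\psi\left(c\right)\right)\leq\psi\left(cf'\left(c\right)\right)-\psi\left(c\right)\psi\left(f'\left(c\right)\right)$.

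To estimate the covariance-type quantity $\psi\left(cf'\left(c\right)\right)-\psi\left(c\right)\psi\left(f'\left(c\right)\right)$, set $X:=c-\bar{c}1$ and $Y:=f'\left(c\right)-\psi\left(f'\left(c\right)\right)1$, which are selfadjoint and commute (both are functions of $c$). Linearity of $\psi$ gives the identity $\psi\left(XY\right)=\psi\left(cf'\left(c\right)\right)-\psi\left(c\right)\psi\left(f'\left(c\right)\right)$, together with $\psi\left(X^{2}\right)=\psi\left(c^{2}\right)-\psi^{2}\left(c\right)$ and $\psi\left(Y^{2}\right)=\psi\left(\left[f'\left(c\right)\right]^{2}\right)-\psi^{2}\left(f'\left(c\right)\right)$. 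The Cauchy--Schwarz inequality for $\psi$ — obtained by noting that for real $\lambda$ the element $\left(X+\lambda Y\right)^{2}=\left(X+\lambda Y\right)^{\ast}\left(X+\lambda Y\right)\geq 0$ by (SF), so that $\lambda\mapsto\psi\left(\left(X+\lambda Y\right)^{2}\right)=\psi\left(X^{2}\right)+2\lambda\,\psi\left(XY\right)+\lambda^{2}\psi\left(Y^{2}\right)$ is a nonnegative real quadratic and hence has nonpositive discriminant — yields $\left[\psi\left(XY\right)\right]^{2}\leq\psi\left(X^{2}\right)\psi\left(Y^{2}\right)$. Finally, the two variance bounds come from $\left(M1-c\right)\left(c-m1\right)\geq 0$ (Lemma~\ref{l.1.2} applied to $\left(M-z\right)\left(z-m\right)\geq 0$ on $\left[m,M\right]$) and the analogous relation for $f'\left(c\right)$ on $\left[f'\left(m\right),f'\left(M\right)\right]$: applying $\psi$ and the elementary estimate $\left(M-t\right)\left(t-m\right)\leq\frac{1}{4}\left(M-m\right)^{2}$ gives $\psi\left(X^{2}\right)\leq\frac{1}{4}\left(M-m\right)^{2}$ and $\psi\left(Y^{2}\right)\leq\frac{1}{4}\left[f'\left(M\right)-f'\left(m\right)\right]^{2}$.

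Combining these, bounding one factor in the Cauchy--Schwarz estimate by its variance bound while keeping the other square root produces the two entries of the middle bracket of \eqref{e.1.4}, and bounding both factors simultaneously yields the final constant $\frac{1}{4}\left(M-m\right)\left[f'\left(M\right)-f'\left(m\right)\right]$. The step I expect to demand the most care is the Cauchy--Schwarz inequality: one must ensure that the centered elements $X$ and $Y$ are genuinely selfadjoint, so that $\left(X+\lambda Y\right)^{2}$ is the square of a selfadjoint element and hence nonnegative by the Shirali--Ford theorem, and that a merely positive normalized functional $\psi$ — not assumed to come from a representation — still supports the quadratic-discriminant argument. Everything else reduces to the scalar convexity inequalities transported through Lemma~\ref{l.1.2}.
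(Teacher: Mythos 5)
Your proof is correct: the support-line inequality at $\psi\left(c\right)$ gives the Jensen bound, the gradient inequality $f\left(z\right)-f\left(\bar{c}\right)\leq f'\left(z\right)\left(z-\bar{c}\right)$ transported by Lemma~\ref{l.1.2} gives the first reverse, and the Cauchy--Schwarz (Gr\"{u}ss-type) estimate for $\psi$ via the Shirali--Ford theorem, combined with the variance bounds from $\left(M1-c\right)\left(c-m1\right)\geq 0$, yields the remaining links. Note that this paper does not prove Theorem~\ref{t.B} at all --- it is quoted from \cite{SSDJen} and \cite{SSDGru} --- but your argument is exactly the standard route those Jensen/Gr\"{u}ss-type papers follow, so there is nothing further to compare.
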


Motivated by these results we establish in this paper some inequalities for
analytic and convex functions on an open interval and positive normalized
functionals defined on a Hermitian unital Banach $\ast $-algebra. Reverses
and refinements of Jensen's and Slater's type inequalities are provided.
Some examples for particular convex functions of interest are given as well.

\section{Some Reverses}

We have:

\begin{theorem}
\label{t.2.1}Let $f\left( z\right) $ be analytic in $G$, an open subset of $%
\mathbb{C}$ and the real interval $I\subset G.$ If $f$ is convex on the
interval $I$ and $\psi :A\rightarrow \mathbb{C}$ is a positive normalized
linear functional on $A,$ then for any selfadjoint element $c\in A$ with $%
\sigma \left( c\right) \subseteq \left[ m,M\right] \subset I$ for some real
numbers $m<M,$ 
\begin{align}
0& \leq \psi \left( f\left( c\right) \right) -f\left( \psi \left( c\right)
\right)   \label{e.2.3} \\
&  \notag \\
& \leq \frac{\left( M-\psi \left( c\right) \right) \left( \psi \left(
c\right) -m\right) }{M-m}\sup_{t\in \left( m,M\right) }\Theta _{f}\left(
t;m,M\right)   \notag \\
&  \notag \\
& \leq \left\{ 
\begin{array}{c}
\frac{1}{4}\left( M-m\right) \sup_{t\in \left( m,M\right) }\Theta _{f}\left(
t;m,M\right)  \\ 
\\ 
\left( M-\psi \left( c\right) \right) \left( \psi \left( c\right) -m\right) 
\frac{f^{\prime }\left( M\right) -f^{\prime }\left( m\right) }{M-m}%
\end{array}%
\right.   \notag \\
&  \notag \\
& \leq \frac{1}{4}\left( M-m\right) \left[ f^{\prime }\left( M\right)
-f^{\prime }\left( m\right) \right]   \notag
\end{align}%
provided $\psi \left( c\right) \in \left( m,M\right) ,$ where $\Theta
_{f}\left( \cdot ;m,M\right) :\left( m,M\right) \rightarrow \mathbb{R}$ is
defined by%
\begin{equation*}
\Theta _{f}\left( t;m,M\right) =\frac{f\left( M\right) -f\left( t\right) }{%
M-t}-\frac{f\left( t\right) -f\left( m\right) }{t-m}.
\end{equation*}

We also have%
\begin{align}
0& \leq \psi \left( f\left( c\right) \right) -f\left( \psi \left( c\right)
\right) \leq \frac{1}{4}\left( M-m\right) \Theta _{f}\left( \psi \left(
c\right) ;m,M\right)  \label{e.2.3.1} \\
&  \notag \\
& \leq \frac{1}{4}\left( M-m\right) \sup_{t\in \left( m,M\right) }\Theta
_{f}\left( t;m,M\right) \leq \frac{1}{4}\left( M-m\right) \left[ f^{\prime
}\left( M\right) -f^{\prime }\left( m\right) \right] ,  \notag
\end{align}%
provided $\psi \left( c\right) \in \left( m,M\right) .$
\end{theorem}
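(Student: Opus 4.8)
The plan is to reduce everything to a single pointwise majorization of $f$ by its chord, transport it into $A$ by Lemma \ref{l.1.2}, and then flatten it with $\psi$. First I would record that, since $c$ is selfadjoint with $\sigma(c)\subseteq[m,M]$, the functions $z\mapsto z-m$ and $z\mapsto M-z$ are nonnegative on $[m,M]$, so Lemma \ref{l.1.2} gives $m\cdot 1\leq c\leq M\cdot 1$; applying the order-preserving normalized functional $\psi$ shows $\psi(c)$ is a real number with $m\leq\psi(c)\leq M$, and by hypothesis $\psi(c)\in(m,M)$. The essential input is the chord inequality for a convex function: on $[m,M]$ one has
\[
f(z)\leq g(z):=\frac{(M-z)f(m)+(z-m)f(M)}{M-m}.
\]
Here $g$ is affine, hence entire, and $g(z)-f(z)\geq0$ for every $z$ in the real interval $[m,M]\subset G$. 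Thus Lemma \ref{l.1.2}, applied with the interval $[m,M]$ in place of $I$, yields the operator inequality $f(c)\leq g(c)$ in the order of $A$; because $g$ is affine its functional calculus is mere substitution, $g(c)=\frac{(M\cdot1-c)f(m)+(c-m\cdot1)f(M)}{M-m}$.

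Next I would apply $\psi$. Linearity and $\psi(1)=1$ give $\psi(g(c))=\frac{(M-\psi(c))f(m)+(\psi(c)-m)f(M)}{M-m}$, and order preservation gives $\psi(f(c))\leq\psi(g(c))$. Subtracting $f(\psi(c))$ reduces the whole theorem to a scalar identity in the real variable $t=\psi(c)\in(m,M)$. Writing $M-m=(M-t)+(t-m)$ in the numerator and factoring, one checks the identity
\[
\frac{(M-t)f(m)+(t-m)f(M)}{M-m}-f(t)=\frac{(M-t)(t-m)}{M-m}\,\Theta_f(t;m,M).
\]
This is the engine of the proof; it is only an algebraic rearrangement, but getting the grouping right is the one place to be careful. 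With $t=\psi(c)$ it already gives $\psi(f(c))-f(\psi(c))\leq\frac{(M-\psi(c))(\psi(c)-m)}{M-m}\Theta_f(\psi(c);m,M)$, and bounding $\Theta_f(\psi(c);m,M)\leq\sup_{t\in(m,M)}\Theta_f(t;m,M)$ produces the first nontrivial inequality of \eqref{e.2.3}.

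Finally I would close the chains with two elementary scalar estimates, each used in the appropriate order. The arithmetic--geometric mean inequality gives $(M-\psi(c))(\psi(c)-m)\leq\frac14(M-m)^2$, so $\frac{(M-\psi(c))(\psi(c)-m)}{M-m}\leq\frac14(M-m)$; this yields the first branch of the bracketed bound and, combined with the identity above, immediately gives the leading inequality of \eqref{e.2.3.1}. For the second branch I would apply the mean value theorem to each difference quotient in $\Theta_f$: $\frac{f(M)-f(t)}{M-t}=f'(\xi_1)$ with $\xi_1\in(t,M)$ and $\frac{f(t)-f(m)}{t-m}=f'(\xi_2)$ with $\xi_2\in(m,t)$, and since convexity makes $f'$ nondecreasing, $\Theta_f(t;m,M)=f'(\xi_1)-f'(\xi_2)\leq f'(M)-f'(m)$ for every $t$, whence $\sup_{t\in(m,M)}\Theta_f(t;m,M)\leq f'(M)-f'(m)$. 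Substituting these two bounds in the two possible orders reproduces the remaining terms of the bracketed estimate, the common final bound $\frac14(M-m)[f'(M)-f'(m)]$ in \eqref{e.2.3}, and the last two inequalities of \eqref{e.2.3.1}. There is no analytic obstacle beyond these steps; the only genuine subtlety is invoking Lemma \ref{l.1.2} over $[m,M]$ rather than $I$, which is exactly what the spectral containment $\sigma(c)\subseteq[m,M]$ permits.
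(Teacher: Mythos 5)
Your proposal is correct and follows essentially the same route as the paper: the chord (Lah--Ribari\'{c}) inequality transported into $A$ via Lemma \ref{l.1.2}, application of $\psi$, the algebraic identity expressing the chord defect as $\frac{(M-t)(t-m)}{M-m}\Theta_f(t;m,M)$, and the two elementary bounds $(M-\psi(c))(\psi(c)-m)\leq\frac{1}{4}(M-m)^2$ and $\sup_t\Theta_f\leq f'(M)-f'(m)$ combined in both orders. The only cosmetic difference is that you bound $\Theta_f$ pointwise via the mean value theorem and monotonicity of $f'$, while the paper splits the supremum into a supremum and an infimum of the two difference quotients; both rest on the same convexity fact.
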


\begin{proof}
By the convexity of $f$ on $\left[ m,M\right] $ we have for any $z\in \left[
m,M\right] $ that%
\begin{equation}
f\left( z\right) \leq \frac{z-m}{M-m}f\left( M\right) +\frac{M-z}{M-m}%
f\left( m\right) .  \label{e.2.4}
\end{equation}%
Using Lemma \ref{l.1.2} we have by (\ref{e.2.4}) for any selfadjoint element 
$c\in A$ with $\sigma \left( c\right) \subseteq \left[ m,M\right] $ that%
\begin{equation}
f\left( c\right) \leq f\left( M\right) \frac{c-m}{M-m}+f\left( m\right) 
\frac{M-c}{M-m}  \label{e.2.5}
\end{equation}%
in the order of $A.$

If we take in this inequality the functional $\psi $ we get the following
reverse of Jensen's inequality%
\begin{equation}
\psi \left( f\left( c\right) \right) \leq f\left( M\right) \frac{\psi \left(
c\right) -m}{M-m}+f\left( m\right) \frac{M-\psi \left( c\right) }{M-m}.
\label{e.2.6}
\end{equation}%
This generalizes the scalar Lah-Ribari\'{c} inequality for convex functions
that is well known in the literature, see for instance \cite[p. 57]{FMPS}
for an extension to selfadjoint operators in Hilbert spaces.

Define 
\begin{equation*}
\Delta _{f}\left( t;m,M\right) :=\frac{\left( t-m\right) f\left( M\right)
+\left( M-t\right) f\left( m\right) }{M-m}-f\left( t\right) ,\quad t\in %
\left[ m,M\right] ,
\end{equation*}%
then we have%
\begin{align}
\Delta _{f}\left( t;m,M\right) & =\frac{\left( t-m\right) f\left( M\right)
+\left( M-t\right) f\left( m\right) -\left( M-m\right) f\left( t\right) }{M-m%
}  \label{e.2.7} \\
& =\frac{\left( t-m\right) f\left( M\right) +\left( M-t\right) f\left(
m\right) -\left( M-t+t-m\right) f\left( t\right) }{M-m}  \notag \\
& =\frac{\left( t-m\right) \left[ f\left( M\right) -f\left( t\right) \right]
-\left( M-t\right) \left[ f\left( t\right) -f\left( m\right) \right] }{M-m} 
\notag \\
& =\frac{\left( M-t\right) \left( t-m\right) }{M-m}\Theta _{f}\left(
t;m,M\right)   \notag
\end{align}%
for any $t\in \left( m,M\right) .$

From (\ref{e.2.6}) we have for $\psi \left( c\right) \in \left( m,M\right) $
that 
\begin{align}
& \psi \left( f\left( c\right) \right) -f\left( \psi \left( c\right) \right)
\label{e.2.8} \\
& \leq \frac{\left( \psi \left( c\right) -m\right) f\left( M\right) +\left(
M-\psi \left( c\right) \right) f\left( m\right) }{M-m}-f\left( \psi \left(
c\right) \right)  \notag \\
& =\Delta _{f}\left( \psi \left( c\right) ;m,M\right) =\frac{\left( M-\psi
\left( c\right) \right) \left( \psi \left( c\right) -m\right) }{M-m}\Theta
_{f}\left( \psi \left( c\right) ;m,M\right)  \notag \\
& \leq \frac{\left( M-\psi \left( c\right) \right) \left( \psi \left(
c\right) -m\right) }{M-m}\sup_{t\in \left( m,M\right) }\Theta _{f}\left(
t;m,M\right) .  \notag
\end{align}

We also have%
\begin{align*}
\sup_{t\in \left( m,M\right) }\Theta _{f}\left( t;m,M\right) & =\sup_{t\in
\left( m,M\right) }\left[ \frac{f\left( M\right) -f\left( t\right) }{M-t}-%
\frac{f\left( t\right) -f\left( m\right) }{t-m}\right] \\
& \leq \sup_{t\in \left( m,M\right) }\left[ \frac{f\left( M\right) -f\left(
t\right) }{M-t}\right] +\sup_{t\in \left( m,M\right) }\left[ -\frac{f\left(
t\right) -f\left( m\right) }{t-m}\right] \\
& =\sup_{t\in \left( m,M\right) }\left[ \frac{f\left( M\right) -f\left(
t\right) }{M-t}\right] -\inf_{t\in \left( m,M\right) }\left[ \frac{\Phi
\left( t\right) -\Phi \left( m\right) }{t-m}\right] \\
& =f^{\prime }\left( M\right) -f^{\prime }\left( m\right)
\end{align*}%
and since, obviously%
\begin{equation*}
\frac{\left( M-\psi \left( c\right) \right) \left( \psi \left( c\right)
-m\right) }{M-m}\leq \frac{1}{4}\left( M-m\right)
\end{equation*}%
we have the desired result (\ref{e.2.3}).

From (\ref{e.2.8}) we have 
\begin{align*}
& \psi \left( f\left( c\right) \right) -f\left( \psi \left( c\right) \right)
\leq \frac{\left( M-\psi \left( c\right) \right) \left( \psi \left( c\right)
-m\right) }{M-m}\Theta _{f}\left( \psi \left( c\right) ;m,M\right) \\
& \leq \frac{1}{4}\left( M-m\right) \Theta _{f}\left( \psi \left( c\right)
;m,M\right) \leq \frac{1}{4}\left( M-m\right) \sup_{t\in \left( m,M\right)
}\Theta _{f}\left( t;m,M\right) \\
& \leq \frac{1}{4}\left( M-m\right) \left[ f^{\prime }\left( M\right)
-f^{\prime }\left( m\right) \right]
\end{align*}%
that proves (\ref{e.2.3.1}).
\end{proof}

We also have:

\begin{theorem}
\label{t.2.2}With the assumptions of Theorem \ref{t.2.1} we have%
\begin{align}
0& \leq \psi \left( f\left( c\right) \right) -f\left( \psi \left( c\right)
\right)   \label{e.2.9} \\
&  \notag \\
& \leq \left( 1+2\frac{\left\vert \psi \left( c\right) -\frac{m+M}{2}%
\right\vert }{M-m}\right) \left[ \frac{f\left( m\right) +f\left( M\right) }{2%
}-f\left( \frac{m+M}{2}\right) \right]   \notag \\
&  \notag \\
& \leq f\left( m\right) +f\left( M\right) -2f\left( \frac{m+M}{2}\right) . 
\notag
\end{align}
\end{theorem}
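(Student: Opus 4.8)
The plan is to build on the reverse Jensen inequality already obtained inside the proof of Theorem \ref{t.2.1}. Its inequality (\ref{e.2.8}) gives, for $\psi \left( c\right) \in \left( m,M\right) $,
\[
0\leq \psi \left( f\left( c\right) \right) -f\left( \psi \left( c\right) \right) \leq \Delta _{f}\left( \psi \left( c\right) ;m,M\right) ,
\]
where $\Delta _{f}\left( t;m,M\right) =\frac{\left( t-m\right) f\left( M\right) +\left( M-t\right) f\left( m\right) }{M-m}-f\left( t\right) $. The first observation is that at the midpoint $t=\frac{m+M}{2}$ one has $\Delta _{f}\left( \frac{m+M}{2};m,M\right) =\frac{f\left( m\right) +f\left( M\right) }{2}-f\left( \frac{m+M}{2}\right) $, which is precisely the bracket on the right-hand side of (\ref{e.2.9}). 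Thus the heart of the matter is the scalar estimate
\[
\Delta _{f}\left( t;m,M\right) \leq \left( 1+2\frac{\left\vert t-\frac{m+M}{2}\right\vert }{M-m}\right) \Delta _{f}\left( \tfrac{m+M}{2};m,M\right) ,\quad t\in \left[ m,M\right] ,
\]
applied at $t=\psi \left( c\right) $.

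To prove this scalar estimate I would pass to the variable $\lambda =\frac{M-t}{M-m}\in \left[ 0,1\right] $, so that $t=\lambda m+\left( 1-\lambda \right) M$ and $\Delta _{f}$ becomes
\[
D\left( \lambda \right) :=\lambda f\left( m\right) +\left( 1-\lambda \right) f\left( M\right) -f\left( \lambda m+\left( 1-\lambda \right) M\right) .
\]
Since $\lambda m+\left( 1-\lambda \right) M$ is affine in $\lambda $ and $f$ is convex, $D$ is \emph{concave} on $\left[ 0,1\right] $ with $D\left( 0\right) =D\left( 1\right) =0$, and its midpoint value is $D\left( 1/2\right) =\Delta _{f}\left( \frac{m+M}{2};m,M\right) $. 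The key elementary fact I would invoke is that for a concave function $D$ with $D\left( 0\right) =0$ the difference quotient $\lambda \mapsto D\left( \lambda \right) /\lambda $ is non-increasing on $\left( 0,1\right] $ (write $\lambda _{1}$ as a convex combination of $\lambda _{2}$ and $0$ and use concavity). Hence for $\lambda \geq 1/2$ we get $D\left( \lambda \right) /\lambda \leq D\left( 1/2\right) /\left( 1/2\right) $, i.e.\ $D\left( \lambda \right) \leq 2\lambda D\left( 1/2\right) $; applying the same fact to the reflected concave function $\mu \mapsto D\left( 1-\mu \right) $ handles $\lambda \leq 1/2$ and yields $D\left( \lambda \right) \leq 2\left( 1-\lambda \right) D\left( 1/2\right) $. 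Since $2\lambda =1+2\left( \lambda -\tfrac{1}{2}\right) $ and $2\left( 1-\lambda \right) =1+2\left( \tfrac{1}{2}-\lambda \right) $, the two cases combine into $D\left( \lambda \right) \leq \left( 1+2\left\vert \lambda -\tfrac{1}{2}\right\vert \right) D\left( 1/2\right) $, and translating back through $\left\vert \lambda -\tfrac{1}{2}\right\vert =\frac{\left\vert t-\left( m+M\right) /2\right\vert }{M-m}$ gives exactly the scalar estimate.

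For the final inequality in (\ref{e.2.9}) I would note that, since $\psi $ preserves order and $m\leq \sigma \left( c\right) \leq M$, we have $\psi \left( c\right) \in \left[ m,M\right] $ and therefore $\left\vert \psi \left( c\right) -\frac{m+M}{2}\right\vert \leq \frac{M-m}{2}$, so the coefficient $1+2\frac{\left\vert \psi \left( c\right) -\left( m+M\right) /2\right\vert }{M-m}$ is at most $2$. Combining this with the nonnegativity of $\frac{f\left( m\right) +f\left( M\right) }{2}-f\left( \frac{m+M}{2}\right) $ (which is $\Delta _{f}$ at the midpoint, nonnegative by convexity) together with the identity $\frac{f\left( m\right) +f\left( M\right) }{2}-f\left( \frac{m+M}{2}\right) =\frac{1}{2}\left[ f\left( m\right) +f\left( M\right) -2f\left( \frac{m+M}{2}\right) \right] $ collapses the right-hand side to $f\left( m\right) +f\left( M\right) -2f\left( \frac{m+M}{2}\right) $, as desired.

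I expect the main obstacle to be the scalar comparison estimate, in particular isolating the correct convexity/concavity property to produce the \emph{affine-in-}$\left\vert t-\left( m+M\right) /2\right\vert $ multiplier rather than a crude constant bound such as $2\Delta _{f}\left( \text{midpoint}\right) $. The chord-slope monotonicity of the concave gap function $D$ is exactly what makes the sharp coefficient appear; everything else (the reverse Jensen step and the final numerical reduction) is routine given the machinery already set up in Theorem \ref{t.2.1}.
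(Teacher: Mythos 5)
Your argument is correct, and it shares the paper's overall architecture: both reduce the problem, via the Lah-Ribari\'{c} estimate in (\ref{e.2.8}), to the scalar inequality $\Delta _{f}\left( t;m,M\right) \leq \left( 1+2\frac{\left\vert t-\left( m+M\right) /2\right\vert }{M-m}\right) \left[ \frac{f\left( m\right) +f\left( M\right) }{2}-f\left( \frac{m+M}{2}\right) \right] $ evaluated at $t=\psi \left( c\right) $, and both finish by observing that the multiplier is at most $2$ because $\psi \left( c\right) \in \left[ m,M\right] $. Where you genuinely differ is in how that scalar inequality is obtained. The paper quotes the author's earlier bounds (\ref{e.2.10}) for the normalized Jensen functional from \cite{SSDBul}, specializes to $n=2$ to get (\ref{e.2.11}), and substitutes $t=\frac{M-\psi \left( c\right) }{M-m}$, $x=m$, $y=M$, using $2\max \left\{ t,1-t\right\} =1+2\left\vert t-\frac{1}{2}\right\vert $. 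You instead give a self-contained proof of exactly the second inequality in (\ref{e.2.11}) for this two-point case: the Jensen gap $D\left( \lambda \right) =\lambda f\left( m\right) +\left( 1-\lambda \right) f\left( M\right) -f\left( \lambda m+\left( 1-\lambda \right) M\right) $ is concave with $D\left( 0\right) =D\left( 1\right) =0$, so $D\left( \lambda \right) /\lambda $ is non-increasing (write $\lambda _{1}$ as a convex combination of $0$ and $\lambda _{2}$, as you indicate) and, by reflection, $D\left( \lambda \right) /\left( 1-\lambda \right) $ is non-decreasing, which together give $D\left( \lambda \right) \leq 2\max \left\{ \lambda ,1-\lambda \right\} D\left( 1/2\right) $; the change of variable $\left\vert \lambda -\frac{1}{2}\right\vert =\frac{\left\vert t-\left( m+M\right) /2\right\vert }{M-m}$ is also correct. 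Your chord-slope argument buys a proof independent of the external reference at essentially no extra cost, while the paper's route buys generality, since (\ref{e.2.10}) covers arbitrary $n$ and convex functions on linear spaces, of which the present application is the simplest instance.
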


\begin{proof}
First of all, we recall the following result obtained by the author in \cite%
{SSDBul}\ that provides a refinement and a reverse for the weighted Jensen's
discrete inequality:%
\begin{align}
& n\min_{i\in \left\{ 1,...,n\right\} }\left\{ p_{i}\right\} \left[ \frac{1}{%
n}\sum_{i=1}^{n}\Phi \left( x_{i}\right) -\Phi \left( \frac{1}{n}%
\sum_{i=1}^{n}x_{i}\right) \right]  \label{e.2.10} \\
& \leq \frac{1}{P_{n}}\sum_{i=1}^{n}p_{i}\Phi \left( x_{i}\right) -\Phi
\left( \frac{1}{P_{n}}\sum_{i=1}^{n}p_{i}x_{i}\right)  \notag \\
& n\max_{i\in \left\{ 1,...,n\right\} }\left\{ p_{i}\right\} \left[ \frac{1}{%
n}\sum_{i=1}^{n}\Phi \left( x_{i}\right) -\Phi \left( \frac{1}{n}%
\sum_{i=1}^{n}x_{i}\right) \right] ,  \notag
\end{align}%
where $\Phi :C\rightarrow \mathbb{R}$ is a convex function defined on the
convex subset $C$ of the linear space $X,$ $\left\{ x_{i}\right\} _{i\in
\left\{ 1,...,n\right\} }\subset C$ are vectors and $\left\{ p_{i}\right\}
_{i\in \left\{ 1,...,n\right\} }$ are nonnegative numbers with $%
P_{n}:=\sum_{i=1}^{n}p_{i}>0.$

For $n=2$ we deduce from (\ref{e.2.10}) that%
\begin{align}
& 2\min \left\{ t,1-t\right\} \left[ \frac{\Phi \left( x\right) +\Phi \left(
y\right) }{2}-\Phi \left( \frac{x+y}{2}\right) \right]  \label{e.2.11} \\
& \leq t\Phi \left( x\right) +\left( 1-t\right) \Phi \left( y\right) -\Phi
\left( tx+\left( 1-t\right) y\right)  \notag \\
& \leq 2\max \left\{ t,1-t\right\} \left[ \frac{\Phi \left( x\right) +\Phi
\left( y\right) }{2}-\Phi \left( \frac{x+y}{2}\right) \right]  \notag
\end{align}%
for any $x,y\in C$ and $t\in \left[ 0,1\right] .$

If we use the second inequality in (\ref{e.2.11}) for the convex function $%
f:I\rightarrow \mathbb{R}$ and $m,$ $M\in \mathbb{R}$, $m<M$ with $\left[ m,M%
\right] \subset I,$ we have for $t=\frac{M-\psi \left( c\right) }{M-m}$ that 
\begin{align}
& \frac{\left( M-\psi \left( c\right) \right) f\left( m\right) +\left( \psi
\left( c\right) -m\right) f\left( M\right) }{M-m}  \label{e.2.12} \\
& -f\left( \frac{m\left( M-\psi \left( c\right) \right) +M\left( \psi \left(
c\right) -m\right) }{M-m}\right)   \notag \\
& \leq 2\max \left\{ \frac{M-\psi \left( c\right) }{M-m},\frac{\psi \left(
c\right) -m}{M-m}\right\} \left[ \frac{f\left( m\right) +f\left( M\right) }{2%
}-f\left( \frac{m+M}{2}\right) \right] ,  \notag
\end{align}%
namely%
\begin{align}
& \frac{\left( M-\psi \left( c\right) \right) f\left( m\right) +\left( \psi
\left( c\right) -m\right) f\left( M\right) }{M-m}-f\left( \psi \left(
c\right) \right)   \label{e.2.13} \\
& \leq \left( 1+2\frac{\left\vert \psi \left( c\right) -\frac{m+M}{2}%
\right\vert }{M-m}\right) \left[ \frac{f\left( m\right) +f\left( M\right) }{2%
}-f\left( \frac{m+M}{2}\right) \right]   \notag \\
& \times \left[ \frac{f\left( m\right) +f\left( M\right) }{2}-f\left( \frac{%
m+M}{2}\right) \right] .  \notag
\end{align}%
On making use of the first inequality in (\ref{e.2.8}) and (\ref{e.2.13}) we
get the first part of (\ref{e.2.9}).

The last part follows by the fact that $m\leq \psi \left( c\right) \leq M.$
\end{proof}

\section{Refinements and Reverses}

We start with the following result:

\begin{theorem}
\label{t.3.1}Let $f\left( z\right) $ be analytic in $G$, an open subset of $%
\mathbb{C}$ and the real interval $I\subset G,$ $\left[ m,M\right] \subset I$
for some real numbers $m<M,$ and $\psi :A\rightarrow \mathbb{C}$ is a
positive normalized linear functional on $A.$ If there exists the constants $%
K>k\geq 0$ such that 
\begin{equation}
K\geq f^{\prime \prime }\left( z\right) \geq k\text{ for any }z\in \left[ m,M%
\right] ,  \label{kK}
\end{equation}%
then for any selfadjoint element $c\in A$ with $\sigma \left( c\right)
\subseteq \left[ m,M\right] \subset I,$%
\begin{equation}
\frac{1}{2}K\psi \left[ \left( c-t\right) ^{2}\right] \geq \psi \left(
f\left( c\right) \right) -f^{\prime }\left( t\right) \left( \psi \left(
c\right) -t\right) -f\left( t\right) \geq \frac{1}{2}k\psi \left[ \left(
c-t\right) ^{2}\right]   \label{e.3.1}
\end{equation}%
and 
\begin{equation}
\frac{1}{2}K\psi \left[ \left( c-t\right) ^{2}\right] \geq \psi \left(
cf^{\prime }\left( c\right) \right) -t\psi \left( f^{\prime }\left( c\right)
\right) +f\left( t\right) -\psi \left( f\left( c\right) \right) \geq \frac{1%
}{2}k\psi \left[ \left( c-t\right) ^{2}\right] ,  \label{e.3.2}
\end{equation}%
for any $t\in \left[ m,M\right] .$
\end{theorem}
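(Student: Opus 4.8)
The plan is to reduce each double inequality to a pointwise scalar inequality on $[m,M]$ via Taylor's formula with Lagrange remainder, and then lift it to $A$ through Lemma \ref{l.1.2}, finishing by applying the functional $\psi$.

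For (\ref{e.3.1}), fix $t\in\left[ m,M\right] $ and expand $f$ to second order about $t$: for each $z\in \left[ m,M\right] $ there is a $\xi $ between $z$ and $t$ with
\[
f\left( z\right) -f\left( t\right) -f^{\prime }\left( t\right) \left(
z-t\right) =\frac{1}{2}f^{\prime \prime }\left( \xi \right) \left(
z-t\right) ^{2}.
\]
Since $k\leq f^{\prime \prime }\left( \xi \right) \leq K$ by (\ref{kK}) and $\left( z-t\right) ^{2}\geq 0$, this gives the scalar double inequality
\[
\frac{1}{2}k\left( z-t\right) ^{2}\leq f\left( z\right) -f^{\prime }\left(
t\right) \left( z-t\right) -f\left( t\right) \leq \frac{1}{2}K\left(
z-t\right) ^{2},\quad z\in \left[ m,M\right] .
\]
All three sides are restrictions to $\left[ m,M\right] $ of functions analytic on $G$ (the polynomial pieces are entire, and $f$, $f^{\prime }$ are analytic on $G$), so Lemma \ref{l.1.2} applies to each of the two inequalities and yields, in the order of $A$,
\[
\frac{1}{2}k\left( c-t\right) ^{2}\leq f\left( c\right) -f^{\prime }\left(
t\right) \left( c-t\right) -f\left( t\right) \leq \frac{1}{2}K\left(
c-t\right) ^{2}.
\]
Applying $\psi $ then closes this case: positivity of $\psi $ preserves the order, while linearity together with normalization ($\psi \left( 1\right) =1$, hence $\psi \left( t\cdot 1\right) =t$ and $\psi \left( f\left( t\right) \cdot 1\right) =f\left( t\right) $) and $\psi \left( c-t\right) =\psi \left( c\right) -t$ turn the middle term into $\psi \left( f\left( c\right) \right) -f^{\prime }\left( t\right) \left( \psi \left( c\right) -t\right) -f\left( t\right) $, which is exactly (\ref{e.3.1}).

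For (\ref{e.3.2}) I run the same argument but expand $f$ about the variable point $z$ and evaluate the remainder at $t$: for each $z\in \left[ m,M\right] $ there is an $\eta $ between $t$ and $z$ with $f\left( t\right) -f\left( z\right) -f^{\prime }\left( z\right) \left( t-z\right) =\frac{1}{2}f^{\prime \prime }\left( \eta \right) \left( t-z\right) ^{2}$, that is, $\left( z-t\right) f^{\prime }\left( z\right) +f\left( t\right) -f\left( z\right) =\frac{1}{2}f^{\prime \prime }\left( \eta \right) \left( z-t\right) ^{2}$. With $k\leq f^{\prime \prime }\left( \eta \right) \leq K$ this produces
\[
\frac{1}{2}k\left( z-t\right) ^{2}\leq \left( z-t\right) f^{\prime }\left(
z\right) +f\left( t\right) -f\left( z\right) \leq \frac{1}{2}K\left(
z-t\right) ^{2},\quad z\in \left[ m,M\right] ,
\]
whose middle side is the restriction of the analytic function $z\mapsto zf^{\prime }\left( z\right) -tf^{\prime }\left( z\right) +f\left( t\right) -f\left( z\right) $. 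Lemma \ref{l.1.2} again lifts the inequalities to $A$, and a final application of $\psi $ (using $\psi \left( tf^{\prime }\left( c\right) \right) =t\psi \left( f^{\prime }\left( c\right) \right) $) gives (\ref{e.3.2}).

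The Taylor estimates are routine; the only step needing care is the product term $cf^{\prime }\left( c\right) $ in the second inequality. To identify it as the image of $z\mapsto zf^{\prime }\left( z\right) $ under the functional calculus one must invoke that the calculus is multiplicative, so that the product $z\cdot f^{\prime }\left( z\right) $ is sent to the product $c\cdot f^{\prime }\left( c\right) $. Everything else is a direct transfer through Lemma \ref{l.1.2} followed by the positivity, linearity and normalization of $\psi $.
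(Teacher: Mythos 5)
Your proof is correct and follows essentially the same route as the paper: a second-order Taylor bound $\frac{1}{2}k\left( z-t\right) ^{2}\leq f\left( z\right) -f\left( t\right) -\left( z-t\right) f^{\prime }\left( t\right) \leq \frac{1}{2}K\left( z-t\right) ^{2}$ on $\left[ m,M\right] $, lifted to $A$ by Lemma \ref{l.1.2} first in the variable $z$ (giving (\ref{e.3.1})) and then in the variable $t$ (giving (\ref{e.3.2})), followed by an application of $\psi $. The only cosmetic difference is that you use the Lagrange form of the Taylor remainder where the paper uses the integral form; both yield the same scalar double inequality.
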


\begin{proof}
Using Taylor's representation with the integral remainder we can write the
following identity 
\begin{equation}
f\left( z\right) =\sum_{k=0}^{n}\frac{1}{k!}f^{\left( k\right) }\left(
t\right) \left( z-t\right) ^{k}+\frac{1}{n!}\int_{t}^{z}f^{\left( n+1\right)
}\left( s\right) \left( z-s\right) ^{n}ds  \label{e.3.3}
\end{equation}%
for any $z,$ $t\in \mathring{I},$ the interior of $I.$

For any integrable function $h$ on an interval and any distinct numbers $c,$ 
$d$ in that interval, we have, by the change of variable $s=\left(
1-s\right) c+sd,$ $s\in \left[ 0,1\right] $ that%
\begin{equation*}
\int_{c}^{d}h\left( s\right) ds=\left( d-c\right) \int_{0}^{1}h\left( \left(
1-s\right) c+sd\right) ds.
\end{equation*}%
Therefore,%
\begin{align*}
& \int_{t}^{z}f^{\left( n+1\right) }\left( s\right) \left( z-s\right) ^{n}ds
\\
& =\left( z-t\right) \int_{0}^{1}f^{\left( n+1\right) }\left( \left(
1-s\right) t+sz\right) \left( z-\left( 1-s\right) t-sz\right) ^{n}ds \\
& =\left( z-t\right) ^{n+1}\int_{0}^{1}f^{\left( n+1\right) }\left( \left(
1-s\right) t+sz\right) \left( 1-s\right) ^{n}ds.
\end{align*}%
The identity (\ref{e.3.3}) can then be written as 
\begin{align}
f\left( z\right) & =\sum_{k=0}^{n}\frac{1}{k!}f^{\left( k\right) }\left(
t\right) \left( z-t\right) ^{k}  \label{e.3.4} \\
& +\frac{1}{n!}\left( z-t\right) ^{n+1}\int_{0}^{1}f^{\left( n+1\right)
}\left( \left( 1-s\right) t+sz\right) \left( 1-s\right) ^{n}ds.  \notag
\end{align}%
For $n=1$ we get%
\begin{equation}
f\left( z\right) =f\left( t\right) +\left( z-t\right) f^{\prime }\left(
t\right) +\left( z-t\right) ^{2}\int_{0}^{1}f^{\prime \prime }\left( \left(
1-s\right) t+sz\right) \left( 1-s\right) ds  \label{e.3.5}
\end{equation}%
for any $z,$ $t\in \mathring{I}.$

By the condition (\ref{kK}) we have%
\begin{equation*}
K\int_{0}^{1}\left( 1-s\right) ds\geq \int_{0}^{1}f^{\prime \prime }\left(
\left( 1-s\right) t+sz\right) \left( 1-s\right) ds\geq k\int_{0}^{1}\left(
1-s\right) ds,
\end{equation*}%
namely%
\begin{equation*}
\frac{1}{2}K\geq \int_{0}^{1}f^{\prime \prime }\left( \left( 1-s\right)
t+sz\right) \left( 1-s\right) ds\geq \frac{1}{2}k,
\end{equation*}%
and by (\ref{e.3.5}) we get the double inequality%
\begin{equation}
\frac{1}{2}K\left( z-t\right) ^{2}\geq f\left( z\right) -f\left( t\right)
-\left( z-t\right) f^{\prime }\left( t\right) \geq \frac{1}{2}k\left(
z-t\right) ^{2}  \label{e.3.6}
\end{equation}%
for any $z,$ $t\in \mathring{I}.$

Fix $t\in \left[ m,M\right] $. Using Lemma \ref{l.1.2} and the inequality (%
\ref{e.3.6}) we obtain for the element $c\in A$ with $\sigma \left( c\right)
\subseteq \left[ m,M\right] \subset I$ the following inequality in the order
of $A$ 
\begin{equation*}
\frac{1}{2}K\left( c-t\right) ^{2}\geq f\left( c\right) -f\left( t\right)
-\left( c-t\right) f^{\prime }\left( t\right) \geq \frac{1}{2}k\left(
c-t\right) ^{2}.
\end{equation*}%
If we take in this inequality the functional $\psi $ we get (\ref{e.3.1}).

Fix $z\in \left[ m,M\right] .$ Using Lemma \ref{l.1.2} and the inequality (%
\ref{e.3.6}) we obtain for the element $c\in A$ with $\sigma \left( c\right)
\subseteq \left[ m,M\right] \subset I$ the following inequality in the order
of $A$%
\begin{equation}
\frac{1}{2}K\left( c-z\right) ^{2}\geq f\left( z\right) -f\left( c\right)
-zf^{\prime }\left( c\right) +cf^{\prime }\left( c\right) \geq \frac{1}{2}%
k\left( c-z\right) ^{2}.  \label{e.3.7}
\end{equation}%
If we take in this inequality the functional $\psi $ we get%
\begin{align*}
\frac{1}{2}K\psi \left[ \left( c-z\right) ^{2}\right] & \geq \psi \left(
cf^{\prime }\left( c\right) \right) -z\psi \left( f^{\prime }\left( c\right)
\right) -\psi \left( f\left( c\right) \right) +f\left( z\right) \\
& \geq \frac{1}{2}k\psi \left[ \left( c-z\right) ^{2}\right] ,
\end{align*}%
for any $z\in \left[ m,M\right] .$ If we replace $z$ with $t$ we get the
desired result (\ref{e.3.2}).
\end{proof}

\begin{corollary}
\label{c.3.1}With the assumptions of Theorem \ref{t.3.1} we have the
Jensen's type inequalities%
\begin{equation}
\frac{1}{2}K\left[ \psi \left( c^{2}\right) -\psi ^{2}\left( c\right) \right]
\geq \psi \left( f\left( c\right) \right) -f\left( \psi \left( c\right)
\right) \geq \frac{1}{2}k\left[ \psi \left( c^{2}\right) -\psi ^{2}\left(
c\right) \right]   \label{e.3.8}
\end{equation}%
and 
\begin{align}
\frac{1}{2}K\left[ \psi \left( c^{2}\right) -\psi ^{2}\left( c\right) \right]
& \geq \psi \left( cf^{\prime }\left( c\right) \right) -\psi \left( c\right)
\psi \left( f^{\prime }\left( c\right) \right) +f\left( \psi \left( c\right)
\right) -\psi \left( f\left( c\right) \right)   \label{e.3.9} \\
& \geq \frac{1}{2}k\left[ \psi \left( c^{2}\right) -\psi ^{2}\left( c\right) %
\right] .  \notag
\end{align}
\end{corollary}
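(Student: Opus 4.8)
The plan is to derive both inequalities by specializing the free parameter $t$ in Theorem \ref{t.3.1} to the scalar $t=\psi \left( c\right) $. First I would check that this choice is admissible. Since $\sigma \left( c\right) \subseteq \left[ m,M\right] $, applying Lemma \ref{l.1.2} to the pairs of functions $\left( f\left( z\right) ,g\left( z\right) \right) =\left( z,m\right) $ and $\left( M,z\right) $ gives $m\leq c\leq M$ in the order of $A$, and because $\psi $ is positive and normalized the order-preserving property recorded after Definition \ref{d.2.1} yields $m\leq \psi \left( c\right) \leq M$. Hence $t:=\psi \left( c\right) \in \left[ m,M\right] $ and the double inequalities (\ref{e.3.1}) and (\ref{e.3.2}) are legitimate for this value of $t$.

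Next I would evaluate the common variance-type quantity appearing on both outer sides. Treating $t=\psi \left( c\right) $ as the scalar multiple $\psi \left( c\right) \cdot 1$ and expanding gives
\begin{equation*}
\left( c-\psi \left( c\right) \right) ^{2}=c^{2}-2\psi \left( c\right) c+\psi ^{2}\left( c\right) \cdot 1,
\end{equation*}
so that by linearity of $\psi $ and the normalization $\psi \left( 1\right) =1$,
\begin{equation*}
\psi \left[ \left( c-\psi \left( c\right) \right) ^{2}\right] =\psi \left( c^{2}\right) -2\psi ^{2}\left( c\right) +\psi ^{2}\left( c\right) =\psi \left( c^{2}\right) -\psi ^{2}\left( c\right) .
\end{equation*}
This identifies the two bracketed extremes $\frac{1}{2}K\psi \left[ \left( c-t\right) ^{2}\right] $ and $\frac{1}{2}k\psi \left[ \left( c-t\right) ^{2}\right] $ with $\frac{1}{2}K\left[ \psi \left( c^{2}\right) -\psi ^{2}\left( c\right) \right] $ and $\frac{1}{2}k\left[ \psi \left( c^{2}\right) -\psi ^{2}\left( c\right) \right] $ respectively.

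Finally I would substitute $t=\psi \left( c\right) $ into the central expressions. In (\ref{e.3.1}) the term $f^{\prime }\left( t\right) \left( \psi \left( c\right) -t\right) $ becomes $f^{\prime }\left( \psi \left( c\right) \right) \left( \psi \left( c\right) -\psi \left( c\right) \right) =0$, so the middle quantity collapses to $\psi \left( f\left( c\right) \right) -f\left( \psi \left( c\right) \right) $, which is exactly (\ref{e.3.8}). In (\ref{e.3.2}) the same substitution turns the central expression directly into $\psi \left( cf^{\prime }\left( c\right) \right) -\psi \left( c\right) \psi \left( f^{\prime }\left( c\right) \right) +f\left( \psi \left( c\right) \right) -\psi \left( f\left( c\right) \right) $, giving (\ref{e.3.9}). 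There is no genuine obstacle here: the whole content is the admissibility of the choice $t=\psi \left( c\right) $ together with the elementary moment identity above, and once these are established both inequalities follow by direct substitution.
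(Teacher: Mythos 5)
Your proof is correct and follows exactly the paper's route: the paper disposes of this corollary with the single remark that it ``follows by Theorem \ref{t.3.1} on choosing $t=\psi\left( c\right) \in \left[ m,M\right]$,'' and you have simply filled in the routine details (admissibility of the choice $t=\psi(c)$ and the identity $\psi\left[ \left( c-\psi\left( c\right)\right)^{2}\right] =\psi\left( c^{2}\right) -\psi^{2}\left( c\right)$), all of which check out.
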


Follows by Theorem \ref{t.3.1} on choosing $t=\psi \left( c\right) \in \left[
m,M\right] .$

\begin{corollary}
\label{c.3.2}With the assumptions of Theorem \ref{t.3.1} we have%
\begin{align}
& \frac{1}{2}K\psi \left[ \left( c-\frac{m+M}{2}\right) ^{2}\right] 
\label{e.3.10} \\
& \geq \psi \left( f\left( c\right) \right) -f^{\prime }\left( \frac{m+M}{2}%
\right) \left( \psi \left( c\right) -\frac{m+M}{2}\right) -f\left( \frac{m+M%
}{2}\right)   \notag \\
& \geq \frac{1}{2}k\psi \left[ \left( c-\frac{m+M}{2}\right) ^{2}\right]  
\notag
\end{align}%
and 
\begin{align}
& \frac{1}{2}K\psi \left[ \left( c-\frac{m+M}{2}\right) ^{2}\right] 
\label{e.3.11} \\
& \geq \psi \left( cf^{\prime }\left( c\right) \right) -\frac{m+M}{2}\psi
\left( f^{\prime }\left( c\right) \right) +f\left( \frac{m+M}{2}\right)
-\psi \left( f\left( c\right) \right)   \notag \\
& \geq \frac{1}{2}k\psi \left[ \left( c-\frac{m+M}{2}\right) ^{2}\right] . 
\notag
\end{align}
\end{corollary}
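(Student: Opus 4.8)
The plan is to recognize that Corollary~\ref{c.3.2} is a direct specialization of Theorem~\ref{t.3.1} obtained by a single admissible choice of the free parameter $t$. Theorem~\ref{t.3.1} establishes the two-sided estimates (\ref{e.3.1}) and (\ref{e.3.2}) for \emph{every} $t\in\left[m,M\right]$; the corollary simply fixes $t$ to be the midpoint $\frac{m+M}{2}$ of the interval. Thus the only thing that must genuinely be checked is that this choice is legitimate, i.e.\ that $\frac{m+M}{2}\in\left[m,M\right]$.

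First I would observe that since $m<M$ we have
\[
m=\frac{m+m}{2}<\frac{m+M}{2}<\frac{M+M}{2}=M,
\]
so in fact $\frac{m+M}{2}\in\left(m,M\right)\subset\left[m,M\right]$, and the hypotheses of Theorem~\ref{t.3.1} are met with this value of $t$.

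Next I would substitute $t=\frac{m+M}{2}$ directly into the double inequality (\ref{e.3.1}); this yields (\ref{e.3.10}) verbatim, since the left-- and right--hand bounds become $\frac{1}{2}K\psi\!\left[\left(c-\frac{m+M}{2}\right)^{2}\right]$ and $\frac{1}{2}k\psi\!\left[\left(c-\frac{m+M}{2}\right)^{2}\right]$, respectively, while the central expression becomes $\psi\left(f\left(c\right)\right)-f^{\prime}\!\left(\frac{m+M}{2}\right)\left(\psi\left(c\right)-\frac{m+M}{2}\right)-f\!\left(\frac{m+M}{2}\right)$. In exactly the same way, substituting $t=\frac{m+M}{2}$ into (\ref{e.3.2}) produces (\ref{e.3.11}).

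Since both conclusions are obtained by nothing more than evaluating already-proved inequalities at one concrete admissible point, there is no real obstacle here; the substantive work, such as it was, has already been absorbed into the proof of Theorem~\ref{t.3.1} (the Taylor expansion with integral remainder together with the application of Lemma~\ref{l.1.2}). No further estimation, rearrangement, or positivity argument is needed, so the proof reduces to these two displayed substitutions.
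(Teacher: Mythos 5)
Your proposal is correct and matches the paper's own argument exactly: the paper proves the corollary by the single remark that it ``follows by Theorem \ref{t.3.1} on choosing $t=\frac{m+M}{2}$,'' which is precisely the substitution you carry out (with the minor extra care of verifying $\frac{m+M}{2}\in\left[ m,M\right] $). Nothing further is needed.
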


Follows by Theorem \ref{t.3.1} on choosing $t=\frac{m+M}{2}.$

\begin{corollary}
\label{c.3.3}With the assumptions of Theorem \ref{t.3.1} and, if, in
addition, $t=\frac{\psi \left( cf^{\prime }\left( c\right) \right) }{\psi
\left( f^{\prime }\left( c\right) \right) }\in \left[ m,M\right] $ with $%
\psi \left( f^{\prime }\left( c\right) \right) \neq 0,$ then we have the
Slater's type inequalities 
\begin{align}
\frac{1}{2}K\psi \left[ \left( c-\frac{\psi \left( cf^{\prime }\left(
c\right) \right) }{\psi \left( f^{\prime }\left( c\right) \right) }\right)
^{2}\right] & \geq f\left( \frac{\psi \left( cf^{\prime }\left( c\right)
\right) }{\psi \left( f^{\prime }\left( c\right) \right) }\right) -\psi
\left( f\left( c\right) \right)  \label{e.3.12} \\
& \geq \frac{1}{2}k\psi \left[ \left( c-\frac{\psi \left( cf^{\prime }\left(
c\right) \right) }{\psi \left( f^{\prime }\left( c\right) \right) }\right)
^{2}\right] ,  \notag
\end{align}%
and%
\begin{align}
& \frac{1}{2}K\psi \left[ \left( c-\frac{\psi \left( cf^{\prime }\left(
c\right) \right) }{\psi \left( f^{\prime }\left( c\right) \right) }\right)
^{2}\right]  \label{e.3.13} \\
& \geq f^{\prime }\left( \frac{\psi \left( cf^{\prime }\left( c\right)
\right) }{\psi \left( f^{\prime }\left( c\right) \right) }\right) \frac{\psi
\left( cf^{\prime }\left( c\right) \right) }{\psi \left( f^{\prime }\left(
c\right) \right) }-\psi \left( c\right) f^{\prime }\left( \frac{\psi \left(
cf^{\prime }\left( c\right) \right) }{\psi \left( f^{\prime }\left( c\right)
\right) }\right)  \notag \\
& -f\left( \frac{\psi \left( cf^{\prime }\left( c\right) \right) }{\psi
\left( f^{\prime }\left( c\right) \right) }\right) +\psi \left( f\left(
c\right) \right)  \notag \\
& \geq \frac{1}{2}k\psi \left[ \left( c-\frac{\psi \left( cf^{\prime }\left(
c\right) \right) }{\psi \left( f^{\prime }\left( c\right) \right) }\right)
^{2}\right] .  \notag
\end{align}
\end{corollary}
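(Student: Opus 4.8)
The plan is to derive both double inequalities directly from Theorem \ref{t.3.1} by specializing its free parameter $t$ to the value
\[
\tau := \frac{\psi \left( cf^{\prime }\left( c\right) \right) }{\psi \left( f^{\prime }\left( c\right) \right) },
\]
which by hypothesis lies in $\left[ m,M\right] $, so that both (\ref{e.3.1}) and (\ref{e.3.2}) are available with $t=\tau $. No analytic input beyond Theorem \ref{t.3.1} is required; the whole argument reduces to recognizing two elementary algebraic simplifications, one of which produces a cancellation and one of which is merely a reordering.

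For the inequality (\ref{e.3.12}) I would start from (\ref{e.3.2}) with $t=\tau $. Its middle member then reads
\[
\psi \left( cf^{\prime }\left( c\right) \right) -\tau \,\psi \left( f^{\prime }\left( c\right) \right) +f\left( \tau \right) -\psi \left( f\left( c\right) \right) .
\]
The key observation is the cancellation $\tau \,\psi \left( f^{\prime }\left( c\right) \right) =\psi \left( cf^{\prime }\left( c\right) \right) $, which is immediate from the definition of $\tau $ (and is precisely where the hypothesis $\psi \left( f^{\prime }\left( c\right) \right) \neq 0$ is used, so that $\tau $ is well defined). This annihilates the first two terms and leaves exactly $f\left( \tau \right) -\psi \left( f\left( c\right) \right) $, bracketed between $\tfrac{1}{2}K\psi [(c-\tau )^{2}]$ and $\tfrac{1}{2}k\psi [(c-\tau )^{2}]$, which is the assertion (\ref{e.3.12}).

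For the inequality (\ref{e.3.13}) I would instead start from (\ref{e.3.1}) with $t=\tau $. Expanding the product $f^{\prime }\left( \tau \right) \left( \psi \left( c\right) -\tau \right) $, the middle member of (\ref{e.3.1}) becomes
\[
\psi \left( f\left( c\right) \right) -\psi \left( c\right) f^{\prime }\left( \tau \right) +\tau \,f^{\prime }\left( \tau \right) -f\left( \tau \right) ,
\]
and one simply observes that, after reordering, this coincides term for term with the middle member displayed in (\ref{e.3.13}). Thus (\ref{e.3.13}) is nothing more than (\ref{e.3.1}) rewritten at $t=\tau $; here no cancellation occurs, since $f^{\prime }\left( \tau \right) $ multiplies $\psi \left( c\right) $ rather than $\psi \left( f^{\prime }\left( c\right) \right) $.

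Since both claims amount to substituting a single admissible value of $t$ and performing elementary algebra, there is no genuine obstacle. The only points requiring care are verifying that the defining relation for $\tau $ produces the stated two-term cancellation in (\ref{e.3.12}), and that the sign bookkeeping in the expansion for (\ref{e.3.13}) is carried out correctly.
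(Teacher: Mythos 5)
Your proposal is correct and matches the paper's own (very terse) proof: the paper likewise obtains the corollary by substituting $t=\psi\left( cf^{\prime }\left( c\right) \right) /\psi\left( f^{\prime }\left( c\right) \right)$ into Theorem \ref{t.3.1}, and your identification that (\ref{e.3.12}) arises from (\ref{e.3.2}) via the cancellation $\tau\,\psi\left( f^{\prime }\left( c\right) \right) =\psi\left( cf^{\prime }\left( c\right) \right)$ while (\ref{e.3.13}) is (\ref{e.3.1}) rewritten is exactly right. No gaps.
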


Follows by Follows by Theorem \ref{t.3.1} on choosing $t=\frac{\psi \left(
cf^{\prime }\left( c\right) \right) }{\psi \left( f^{\prime }\left( c\right)
\right) }\in \left[ m,M\right] .$ We observe that a sufficient condition for
this to happen is that $f^{\prime }\left( c\right) >0$ and $\psi \left(
f^{\prime }\left( c\right) \right) >0.$

\begin{corollary}
\label{c.3.4}With the assumptions of Theorem \ref{t.3.1} we have%
\begin{align}
& \frac{1}{4}K\left[ \frac{1}{12}\left( M-m\right) ^{2}+\psi \left[ \left( c-%
\frac{m+M}{2}\right) ^{2}\right] \right]  \label{e.3.14} \\
& \geq \frac{1}{2}\left[ \psi \left( f\left( c\right) \right) +\frac{\left(
M-\psi \left( c\right) \right) f\left( M\right) +\left( \psi \left( c\right)
-m\right) f\left( m\right) }{M-m}\right]  \notag \\
& -\frac{1}{M-m}\int_{m}^{M}f\left( t\right) dt  \notag \\
& \geq \frac{1}{4}k\left[ \frac{1}{12}\left( M-m\right) ^{2}+\psi \left[
\left( c-\frac{m+M}{2}\right) ^{2}\right] \right]  \notag
\end{align}%
and%
\begin{align}
& \frac{1}{2}K\left[ \frac{1}{12}\left( M-m\right) ^{2}+\psi \left[ \left( c-%
\frac{m+M}{2}\right) ^{2}\right] \right]  \label{e.3.15} \\
& \geq \frac{1}{M-m}\int_{m}^{M}f\left( z\right) dz-\psi \left( f\left(
c\right) \right) -\frac{m+M}{2}\psi \left( f^{\prime }\left( c\right)
\right) -\psi \left( cf^{\prime }\left( c\right) \right)  \notag \\
& \geq \frac{1}{2}k\left[ \frac{1}{12}\left( M-m\right) ^{2}+\psi \left[
\left( c-\frac{m+M}{2}\right) ^{2}\right] \right] .  \notag
\end{align}
\end{corollary}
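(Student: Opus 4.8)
The plan is to integrate the two double inequalities of Theorem \ref{t.3.1} with respect to the parameter $t$ over $\left[ m,M\right] $ and then divide by $M-m$. Both (\ref{e.3.1}) and (\ref{e.3.2}) are inequalities between real numbers that hold for \emph{every} $t\in \left[ m,M\right] $, so integrating in $t$ preserves the direction of both inequality signs, and the only task that remains is to identify the three resulting averages.

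The key computation is the integral of the common outer expression $\tfrac12 K\,\psi\left[ (c-t)^2\right] $ (and likewise with $k$). Writing $\psi\left[ (c-t)^2\right] =\psi(c^2)-2t\psi(c)+t^2$ and integrating this elementary polynomial in $t$, I would obtain
\[
\frac{1}{M-m}\int_m^M \psi\big[(c-t)^2\big]\,dt=\psi(c^2)-(m+M)\psi(c)+\frac{m^2+mM+M^2}{3}.
\]
A short algebraic rearrangement then shows this equals $\tfrac{1}{12}(M-m)^2+\psi\big[(c-\tfrac{m+M}{2})^2\big]$, which is exactly the bracketed quantity appearing on both sides of (\ref{e.3.14}) and (\ref{e.3.15}). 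Moving $\psi$ under the integral sign is justified because $\psi$ is a bounded linear functional and the integrand is continuous in $t$; equivalently one may perform the $A$-valued Riemann integration first and then apply $\psi$.

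For (\ref{e.3.15}) it then suffices to integrate the middle term of (\ref{e.3.2}) termwise: the factor $t\,\psi(f'(c))$ contributes $\tfrac{m+M}{2}\psi(f'(c))$ via $\frac{1}{M-m}\int_m^M t\,dt=\tfrac{m+M}{2}$, the term $f(t)$ contributes $\frac{1}{M-m}\int_m^M f(t)\,dt$, and the two $t$-independent terms $\psi(cf'(c))$ and $\psi(f(c))$ are unchanged. Pairing this with the leading factor $\tfrac12 K$ (resp. $\tfrac12 k$) yields (\ref{e.3.15}) with no further scaling.

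For (\ref{e.3.14}) the one nonroutine step is the middle term of (\ref{e.3.1}), which carries the product $f'(t)\left( \psi(c)-t\right) $. Here I would integrate by parts,
\[
\int_m^M f'(t)\big(\psi(c)-t\big)\,dt=\big[(\psi(c)-t)f(t)\big]_m^M+\int_m^M f(t)\,dt,
\]
so that after dividing by $M-m$ the boundary terms assemble into $\frac{(M-\psi(c))f(M)+(\psi(c)-m)f(m)}{M-m}$ while a \emph{second} copy of $\frac{1}{M-m}\int_m^M f(t)\,dt$ is produced. Consequently the integrated middle of (\ref{e.3.1}) equals \emph{twice} the middle expression of (\ref{e.3.14}), and dividing the whole chain by $2$ turns the leading constant $\tfrac12 K$ into $\tfrac14 K$, giving (\ref{e.3.14}). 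The main point to watch is precisely this factor of two arising from the integration by parts, which is what separates the constant $\tfrac14$ in (\ref{e.3.14}) from the constant $\tfrac12$ in (\ref{e.3.15}).
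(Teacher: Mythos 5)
Your proposal is correct and is in substance the same argument as the paper's. The paper integrates the scalar Taylor inequality (\ref{e.3.6}) over the parameter on $\left[ m,M\right] $ first and only afterwards invokes Lemma \ref{l.1.2} and applies $\psi $, whereas you integrate over $t$ the already-scalar conclusions (\ref{e.3.1}) and (\ref{e.3.2}) of Theorem \ref{t.3.1}; since all the operations involved are linear, the two orders coincide, and the decisive computations --- the identity $\frac{1}{M-m}\int_{m}^{M}\left( z-t\right) ^{2}dt=\frac{1}{12}\left( M-m\right) ^{2}+\left( z-\frac{m+M}{2}\right) ^{2}$ and the integration by parts that produces the boundary terms together with a second copy of $\frac{1}{M-m}\int_{m}^{M}f\left( t\right) dt$, whence the constant $\frac{1}{4}$ in (\ref{e.3.14}) --- are identical in both routes. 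One point you should not pass over silently: your termwise integration of (\ref{e.3.2}) produces $+\psi \left( cf^{\prime }\left( c\right) \right) $ in the middle expression, not the $-\psi \left( cf^{\prime }\left( c\right) \right) $ displayed in (\ref{e.3.15}); this agrees with what the paper's own intermediate inequality (\ref{e.3.18}) yields after substituting $c$ for $t$ and applying $\psi $, so the sign in the displayed statement of (\ref{e.3.15}) is a typographical error which your derivation corrects --- state the discrepancy explicitly rather than claiming an exact match with the printed formula.
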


\begin{proof}
If we take the integral mean over $t$ on $\left[ m,M\right] $ in the
inequality (\ref{e.3.6}) we get%
\begin{align}
& \frac{1}{2}K\frac{1}{M-m}\int_{m}^{M}\left( z-t\right) ^{2}dt
\label{e.3.16} \\
& \geq f\left( z\right) -\frac{1}{M-m}\int_{m}^{M}f\left( t\right) dt-\frac{1%
}{M-m}\int_{m}^{M}\left( z-t\right) f^{\prime }\left( t\right) dt  \notag \\
& \geq \frac{1}{2}\frac{1}{M-m}\int_{m}^{M}\left( z-t\right) ^{2}dt  \notag
\end{align}%
for any $z\in \left[ m,M\right] .$

Observe that%
\begin{align*}
\frac{1}{M-m}\int_{m}^{M}\left( z-t\right) ^{2}& =\frac{\left( M-z\right)
^{3}+\left( z-m\right) ^{3}}{3\left( M-m\right) } \\
& =\frac{1}{3}\left[ \left( z-m\right) ^{2}+\left( M-z\right) ^{2}-\left(
z-m\right) \left( M-z\right) \right] \\
& =\frac{1}{3}\left[ \frac{1}{4}\left( M-m\right) ^{2}+3\left( z-\frac{m+M}{2%
}\right) ^{2}\right] \\
& =\frac{1}{12}\left( M-m\right) ^{2}+\left( z-\frac{m+M}{2}\right) ^{2}
\end{align*}%
and%
\begin{align*}
& \frac{1}{M-m}\int_{m}^{M}\left( z-t\right) f^{\prime }\left( t\right) dt \\
& =\frac{1}{M-m}\left[ \left. \left( z-t\right) f\left( t\right) \right\vert
_{m}^{M}+\int_{m}^{M}f\left( t\right) dt\right] \\
& =\frac{1}{M-m}\left[ \int_{m}^{M}f\left( t\right) dt-\left( M-z\right)
f\left( M\right) -\left( z-m\right) f\left( m\right) \right] \\
& =\frac{1}{M-m}\int_{m}^{M}f\left( t\right) dt-\frac{\left( M-z\right)
f\left( M\right) +\left( z-m\right) f\left( m\right) }{M-m}.
\end{align*}%
Then by (\ref{e.3.16}) we get%
\begin{align*}
& \frac{1}{2}K\left[ \frac{1}{12}\left( M-m\right) ^{2}+\left( z-\frac{m+M}{2%
}\right) ^{2}\right] \\
& \geq f\left( z\right) -\frac{1}{M-m}\int_{m}^{M}f\left( t\right) dt-\frac{1%
}{M-m}\int_{m}^{M}f\left( t\right) dt \\
& +\frac{\left( M-z\right) f\left( M\right) +\left( z-m\right) f\left(
m\right) }{M-m} \\
& \geq \frac{1}{2}k\left[ \frac{1}{12}\left( M-m\right) ^{2}+\left( z-\frac{%
m+M}{2}\right) ^{2}\right]
\end{align*}%
namely%
\begin{align}
& \frac{1}{4}K\left[ \frac{1}{12}\left( M-m\right) ^{2}+\left( z-\frac{m+M}{2%
}\right) ^{2}\right]  \label{e.3.17} \\
& \geq \frac{1}{2}\left[ f\left( z\right) +\frac{\left( M-z\right) f\left(
M\right) +\left( z-m\right) f\left( m\right) }{M-m}\right] -\frac{1}{M-m}%
\int_{m}^{M}f\left( t\right) dt  \notag \\
& \geq \frac{1}{4}k\left[ \frac{1}{12}\left( M-m\right) ^{2}+\left( z-\frac{%
m+M}{2}\right) ^{2}\right]  \notag
\end{align}%
for any $z\in \left[ m,M\right] .$

Using Lemma \ref{l.1.2} and the inequality (\ref{e.3.17}) we obtain for the
element $c\in A$ with $\sigma \left( c\right) \subseteq \left[ m,M\right]
\subset I$ the following inequality in the order of $A$%
\begin{align*}
& \frac{1}{4}K\left[ \frac{1}{12}\left( M-m\right) ^{2}+\left( c-\frac{m+M}{2%
}\right) ^{2}\right] \\
& \geq \frac{1}{2}\left[ f\left( c\right) +\frac{\left( M-c\right) f\left(
M\right) +\left( c-m\right) f\left( m\right) }{M-m}\right] -\frac{1}{M-m}%
\int_{m}^{M}f\left( t\right) dt \\
& \geq \frac{1}{4}k\left[ \frac{1}{12}\left( M-m\right) ^{2}+\left( c-\frac{%
m+M}{2}\right) ^{2}\right] .
\end{align*}%
If we apply to this inequality the functional $\psi $ we get (\ref{e.3.14}).

If we take the integral mean over $z$ on $\left[ m,M\right] $ in the
inequality (\ref{e.3.6}) we get%
\begin{align*}
& \frac{1}{2}K\frac{1}{M-m}\int_{m}^{M}\left( z-t\right) ^{2}dz \\
& \geq \frac{1}{M-m}\int_{m}^{M}f\left( z\right) dz-f\left( t\right) -\left( 
\frac{m+M}{2}-t\right) f^{\prime }\left( t\right) \\
& \geq \frac{1}{2}k\frac{1}{M-m}\int_{m}^{M}\left( z-t\right) ^{2}dz,
\end{align*}%
namely%
\begin{align}
& \frac{1}{2}K\left[ \frac{1}{12}\left( M-m\right) ^{2}+\left( t-\frac{m+M}{2%
}\right) ^{2}\right]  \label{e.3.18} \\
& \geq \frac{1}{M-m}\int_{m}^{M}f\left( z\right) dz-f\left( t\right) -\left( 
\frac{m+M}{2}-t\right) f^{\prime }\left( t\right)  \notag \\
& \geq \frac{1}{2}k\left[ \frac{1}{12}\left( M-m\right) ^{2}+\left( t-\frac{%
m+M}{2}\right) ^{2}\right]  \notag
\end{align}%
for any $t\in \left[ m,M\right] .$

Using (\ref{e.3.18}) and a similar argument as above, we get the desired
result (\ref{e.3.15}).
\end{proof}

\section{Some Examples}

Assume that $A$ is a Hermitian unital Banach $\ast $-algebra and $\psi
:A\rightarrow \mathbb{C}$ a positive normalized linear functional on $A.$

Let $c\in A$ be a selfadjoint element with $\sigma \left( c\right) \subseteq %
\left[ m,M\right] $ for some real numbers $m<M.$ If we take $f\left(
t\right) =t^{2}$ and calculate%
\begin{equation*}
\Theta _{f}\left( t;m,M\right) =\frac{M^{2}-t^{2}}{M-t}-\frac{t^{2}-m^{2}}{%
t-m}=M-m
\end{equation*}%
then by (\ref{e.2.3}) we get%
\begin{equation}
0\leq \psi \left( c^{2}\right) -\left( \psi \left( c\right) \right) ^{2}\leq
\left( M-\psi \left( c\right) \right) \left( \psi \left( c\right) -m\right)
\leq \frac{1}{4}\left( M-m\right) ^{2}.  \label{e.4.1}
\end{equation}

Consider the convex function $f:\left[ m,M\right] \subset \left( 0,\infty
\right) \rightarrow \left( 0,\infty \right) ,$ $f\left( t\right) =t^{p},$ $%
p>1.$ Using the inequality (\ref{e.2.3}) we have 
\begin{align}
0& \leq \psi \left( c^{p}\right) -\left( \psi \left( c\right) \right)
^{p}\leq p\left( M-\psi \left( c\right) \right) \left( \psi \left( c\right)
-m\right) \frac{M^{p-1}-m^{p-1}}{M-m}  \label{e.4.2} \\
& \leq \frac{1}{4}p\left( M-m\right) \left( M^{p-1}-m^{p-1}\right)  \notag
\end{align}
for any $c\in A$ a selfadjoint element with $\sigma \left( c\right)
\subseteq \left[ m,M\right] \subset \left( 0,\infty \right) .$

If we use the inequality (\ref{e.2.9}) we also get 
\begin{align}
0& \leq \psi \left( c^{p}\right) -\left( \psi \left( c\right) \right) ^{p}
\label{e.4.3} \\
& \leq \left( 1+2\frac{\left\vert \psi \left( c\right) -\frac{m+M}{2}%
\right\vert }{M-m}\right) \left[ \frac{m^{p}+M^{p}}{2}-\left( \frac{m+M}{2}%
\right) ^{p}\right]   \notag \\
& \leq m^{p}+M^{p}-2^{1-p}\left( m+M\right) ^{p}  \notag
\end{align}%
for any $c\in A$ a selfadjoint element with $\sigma \left( c\right)
\subseteq \left[ m,M\right] \subset \left( 0,\infty \right) .$

Since $f^{\prime \prime }\left( t\right) =p\left( p-1\right) t^{p-2},$ $t>0$
then 
\begin{align}
k_{p}& :=p\left( p-1\right) \left\{ 
\begin{array}{c}
M^{p-2}\text{ for }p\in \left( 1,2\right)  \\ 
\\ 
m^{p-2}\text{ for }p\in \lbrack 2,\infty )%
\end{array}%
\right.   \label{e.4.4} \\
&  \notag \\
& \leq f^{\prime \prime }\left( t\right) \leq K_{p}:=p\left( p-1\right)
\left\{ 
\begin{array}{c}
m^{p-2}\text{ for }p\in \left( 1,2\right)  \\ 
\\ 
M^{p-2}\text{ for }p\in \lbrack 2,\infty )%
\end{array}%
\right.   \notag
\end{align}%
for any $t\in \left[ m,M\right] .$

Using (\ref{e.3.8}) and (\ref{e.3.9}) we get%
\begin{equation}
\frac{1}{2}K_{p}\left[ \psi \left( c^{2}\right) -\psi ^{2}\left( c\right) %
\right] \geq \psi \left( c^{p}\right) -\left( \psi \left( c\right) \right)
^{p}\geq \frac{1}{2}k_{p}\left[ \psi \left( c^{2}\right) -\psi ^{2}\left(
c\right) \right]  \label{e.4.5}
\end{equation}%
and 
\begin{align}
\frac{1}{2}K_{p}\left[ \psi \left( c^{2}\right) -\psi ^{2}\left( c\right) %
\right] & \geq \left( p-1\right) \psi \left( c^{p}\right) +\left( \psi
\left( c\right) \right) ^{p}-p\psi \left( c\right) \psi \left( c^{p-1}\right)
\label{e.4.6} \\
& \geq \frac{1}{2}k_{p}\left[ \psi \left( c^{2}\right) -\psi ^{2}\left(
c\right) \right] ,  \notag
\end{align}%
for any $c\in A$ a selfadjoint element with $\sigma \left( c\right)
\subseteq \left[ m,M\right] \subset \left( 0,\infty \right) .$

Using (\ref{e.3.12}) and (\ref{e.3.13}) we get 
\begin{align}
\frac{1}{2}K_{p}\psi \left[ \left( c-\frac{\psi \left( c^{p}\right) }{\psi
\left( c^{p-1}\right) }\right) ^{2}\right] & \geq \left( \frac{\psi \left(
c^{p}\right) }{\psi \left( c^{p-1}\right) }\right) ^{p}-\psi \left(
c^{p}\right)  \label{e.4.7} \\
& \geq \frac{1}{2}k_{p}\psi \left[ \left( c-\frac{\psi \left( c^{p}\right) }{%
\psi \left( c^{p-1}\right) }\right) ^{2}\right] ,  \notag
\end{align}%
and%
\begin{align}
& \frac{1}{2}K_{p}\psi \left[ \left( c-\frac{\psi \left( c^{p}\right) }{\psi
\left( c^{p-1}\right) }\right) ^{2}\right]  \label{e.4.8} \\
& \geq p\left( \frac{\psi \left( c^{p}\right) }{\psi \left( c^{p-1}\right) }%
\right) ^{p-1}\left( \frac{\psi \left( c^{p}\right) }{\psi \left(
c^{p-1}\right) }-\psi \left( c\right) \right) -\left( \frac{\psi \left(
c^{p}\right) }{\psi \left( c^{p-1}\right) }\right) ^{p}+\psi \left(
c^{p}\right)  \notag \\
& \geq \frac{1}{2}k_{p}\psi \left[ \left( c-\frac{\psi \left( c^{p}\right) }{%
\psi \left( c^{p-1}\right) }\right) ^{2}\right]  \notag
\end{align}%
for any $c\in A$ a selfadjoint element with $\sigma \left( c\right)
\subseteq \left[ m,M\right] \subset \left( 0,\infty \right) .$

Using (\ref{e.3.14}) and (\ref{e.3.15}) we also have%
\begin{align}
& \frac{1}{4}K\left[ \frac{1}{12}\left( M-m\right) ^{2}+\psi \left[ \left( c-%
\frac{m+M}{2}\right) ^{2}\right] \right]   \label{e.4.8.1} \\
& \geq \frac{1}{2}\left[ \psi \left( c^{p}\right) +\frac{\left( M-\psi
\left( c\right) \right) M^{p}+\left( \psi \left( c\right) -m\right) m^{p}}{%
M-m}\right]   \notag \\
& -\frac{M^{p+1}-m^{p+1}}{\left( p+1\right) \left( M-m\right) }  \notag \\
& \geq \frac{1}{4}k\left[ \frac{1}{12}\left( M-m\right) ^{2}+\psi \left[
\left( c-\frac{m+M}{2}\right) ^{2}\right] \right]   \notag
\end{align}%
and%
\begin{align}
& \frac{1}{2}K_{p}\left[ \frac{1}{12}\left( M-m\right) ^{2}+\psi \left[
\left( c-\frac{m+M}{2}\right) ^{2}\right] \right]   \label{e.4.9} \\
& \geq \frac{M^{p+1}-m^{p+1}}{\left( p+1\right) \left( M-m\right) }-p\frac{%
m+M}{2}\psi \left( c^{p-1}\right) -\left( p+1\right) \psi \left(
c^{p}\right)   \notag \\
& \geq \frac{1}{2}k_{p}\left[ \frac{1}{12}\left( M-m\right) ^{2}+\psi \left[
\left( c-\frac{m+M}{2}\right) ^{2}\right] \right]   \notag
\end{align}%
for any $c\in A$ a selfadjoint element with $\sigma \left( c\right)
\subseteq \left[ m,M\right] \subset \left( 0,\infty \right) .$

Consider the convex function $f:\left[ m,M\right] \subset \left( 0,\infty
\right) \rightarrow \left( 0,\infty \right) $, $f\left( t\right) =\frac{1}{t}
$. We have%
\begin{equation*}
\Theta _{f}\left( t;m,M\right) =\frac{\frac{1}{M}-\frac{1}{t}}{M-t}-\frac{%
\frac{1}{t}-\frac{1}{m}}{t-m}=\frac{M-m}{tmM},
\end{equation*}%
which implies that 
\begin{equation*}
\sup_{t\in \left( m,M\right) }\Theta _{f}\left( t;m,M\right) =\frac{M-m}{%
m^{2}M}.
\end{equation*}%
From (\ref{e.2.3}) we get%
\begin{align}
0& \leq \psi \left( c^{-1}\right) -\psi ^{-1}\left( c\right) \leq \frac{%
\left( M-\psi \left( c\right) \right) \left( \psi \left( c\right) -m\right) 
}{m^{2}M}  \label{e.4.10} \\
& \leq \left\{ 
\begin{array}{l}
\frac{1}{4m^{2}M}\left( M-m\right) ^{2} \\ 
\\ 
\left( M-\psi \left( c\right) \right) \left( \psi \left( c\right) -m\right) 
\frac{M+m}{m^{2}M^{2}}%
\end{array}%
\right. \leq \frac{1}{4}\left( M-m\right) ^{2}\frac{M+m}{M^{2}m^{2}}  \notag
\end{align}%
for any $c\in A$ a selfadjoint element with $\sigma \left( c\right)
\subseteq \left[ m,M\right] \subset \left( 0,\infty \right) .$

From (\ref{e.2.3.1}) we have 
\begin{equation}
0\leq \psi \left( c^{-1}\right) -\psi ^{-1}\left( c\right) \leq \frac{1}{4}%
\frac{\left( M-m\right) ^{2}}{mM}\psi ^{-1}\left( c\right) \leq \frac{1}{%
4m^{2}M}\left( M-m\right) ^{2}  \label{e.4.11}
\end{equation}%
for any $c\in A$ a selfadjoint element with $\sigma \left( c\right)
\subseteq \left[ m,M\right] \subset \left( 0,\infty \right) .$

From (\ref{e.2.9}) we also have%
\begin{align}
0& \leq \psi \left( c^{-1}\right) -\psi ^{-1}\left( c\right) \leq \frac{%
\left( M-m\right) ^{2}}{2mM\left( m+M\right) }\left( 1+2\frac{\left\vert
\psi \left( c\right) -\frac{m+M}{2}\right\vert }{M-m}\right) 
\label{e.4.11.1} \\
& \leq \frac{\left( M-m\right) ^{2}}{mM\left( m+M\right) }  \notag
\end{align}%
for any $c\in A$ a selfadjoint element with $\sigma \left( c\right)
\subseteq \left[ m,M\right] \subset \left( 0,\infty \right) .$

Since $f^{\prime \prime }\left( t\right) =\frac{2}{t^{3}},$ $t>0,$ then $%
\frac{2}{m^{3}}\geq f^{\prime \prime }\left( t\right) \geq \frac{2}{M^{3}}$
and by (\ref{e.3.8}) and (\ref{e.3.9}) we get 
\begin{equation}
\frac{1}{m^{3}}\left[ \psi \left( c^{2}\right) -\psi ^{2}\left( c\right) %
\right] \geq \psi \left( c^{-1}\right) -\psi ^{-1}\left( c\right) \geq \frac{%
1}{M^{3}}\left[ \psi \left( c^{2}\right) -\psi ^{2}\left( c\right) \right] 
\label{e.4.12}
\end{equation}%
and 
\begin{align}
\frac{1}{m^{3}}\left[ \psi \left( c^{2}\right) -\psi ^{2}\left( c\right) %
\right] & \geq \frac{1}{2}\left[ \psi \left( c\right) \psi \left(
c^{-2}\right) +\psi ^{-1}\left( c\right) \right] -\psi \left( c^{-1}\right) 
\label{e.4.13} \\
& \geq \frac{1}{M^{3}}\left[ \psi \left( c^{2}\right) -\psi ^{2}\left(
c\right) \right] ,  \notag
\end{align}%
for any $c\in A$ a selfadjoint element with $\sigma \left( c\right)
\subseteq \left[ m,M\right] \subset \left( 0,\infty \right) .$

From (\ref{e.3.12}) and (\ref{e.3.13}) we also have  
\begin{align}
\frac{1}{m^{3}}\psi \left[ \left( c-\frac{\psi \left( c^{-1}\right) }{\psi
\left( c^{-2}\right) }\right) ^{2}\right] & \geq \frac{\psi \left(
c^{-2}\right) }{\psi \left( c^{-1}\right) }-\psi \left( c^{-1}\right) 
\label{e.4.14} \\
& \geq \frac{1}{M^{3}}\psi \left[ \left( c-\frac{\psi \left( c^{-1}\right) }{%
\psi \left( c^{-2}\right) }\right) ^{2}\right] ,  \notag
\end{align}%
and%
\begin{align}
& \frac{1}{m^{3}}\psi \left[ \left( c-\frac{\psi \left( c^{-1}\right) }{\psi
\left( c^{-2}\right) }\right) ^{2}\right]   \label{e.4.15} \\
& \geq \psi \left( c^{-1}\right) -\frac{\psi \left( c^{-1}\right) }{\psi
\left( c^{-2}\right) }+\psi \left( c\right) \frac{\psi ^{2}\left(
c^{-2}\right) }{\psi ^{2}\left( c^{-1}\right) }-\frac{\psi \left(
c^{-2}\right) }{\psi \left( c^{-1}\right) }  \notag \\
& \geq \frac{1}{M^{3}}\psi \left[ \left( c-\frac{\psi \left( c^{-1}\right) }{%
\psi \left( c^{-2}\right) }\right) ^{2}\right]   \notag
\end{align}%
for any $c\in A$ a selfadjoint element with $\sigma \left( c\right)
\subseteq \left[ m,M\right] \subset \left( 0,\infty \right) .$

Similar results may be stated for the convex functions $f\left( t\right)
=t^{r},$ $r<0$ and $f\left( t\right) =-t^{q}$, $q\in \left( 0,1\right) .$

The case of logarithmic function is also of interest. If we take the
function $f\left( t\right) =-\ln t$ in (\ref{e.2.3}), then we get%
\begin{equation}
0\leq \ln \left( \psi \left( c\right) \right) -\psi \left( \ln c\right) \leq 
\frac{\left( M-\psi \left( c\right) \right) \left( \psi \left( c\right)
-m\right) }{mM}\leq \frac{1}{4}\frac{\left( M-m\right) ^{2}}{mM}
\label{e.4.16}
\end{equation}%
for any $c\in A$ a selfadjoint element with $\sigma \left( c\right)
\subseteq \left[ m,M\right] \subset \left( 0,\infty \right) .$

From (\ref{e.2.9}) we have 
\begin{align}
0& \leq \ln \left( \psi \left( c\right) \right) -\psi \left( \ln c\right)
\leq \ln \left( \frac{m+M}{2\sqrt{mM}}\right) \left( 1+2\frac{\left\vert
\psi \left( c\right) -\frac{m+M}{2}\right\vert }{M-m}\right)   \label{e.4.17}
\\
& \leq \ln \left( \frac{m+M}{2\sqrt{mM}}\right) ^{2}  \notag
\end{align}%
for any $c\in A$ a selfadjoint element with $\sigma \left( c\right)
\subseteq \left[ m,M\right] \subset \left( 0,\infty \right) .$

Since $f^{\prime \prime }\left( t\right) =\frac{1}{t^{2}}$\ and $\frac{1}{%
m^{2}}\geq f^{\prime \prime }\left( t\right) \geq \frac{1}{M^{2}}$\ for any $%
t\in \left[ m,M\right] \subset \left( 0,\infty \right) ,$ then by (\ref%
{e.3.8}) and (\ref{e.3.9}) we have%
\begin{equation}
\frac{1}{2m^{2}}\left[ \psi \left( c^{2}\right) -\psi ^{2}\left( c\right) %
\right] \geq \ln \left( \psi \left( c\right) \right) -\psi \left( \ln
c\right) \geq \frac{1}{2M^{2}}\left[ \psi \left( c^{2}\right) -\psi
^{2}\left( c\right) \right]   \label{e.4.18}
\end{equation}%
and 
\begin{align}
\frac{1}{2m^{2}}\left[ \psi \left( c^{2}\right) -\psi ^{2}\left( c\right) %
\right] & \geq \psi \left( \ln c\right) -\ln \left( \psi \left( c\right)
\right) +\psi \left( c\right) \psi \left( c^{-1}\right) -1  \label{e.4.19} \\
& \geq \frac{1}{2M^{2}}\left[ \psi \left( c^{2}\right) -\psi ^{2}\left(
c\right) \right] ,  \notag
\end{align}%
for any $c\in A$ a selfadjoint element with $\sigma \left( c\right)
\subseteq \left[ m,M\right] \subset \left( 0,\infty \right) .$

Finally, by making use of (\ref{e.3.12}) and (\ref{e.3.13}) we have 
\begin{align}
\frac{1}{2m^{2}}\psi \left[ \left( c-\psi ^{-1}\left( c^{-1}\right) \right)
^{2}\right] & \geq \psi \left( \ln c\right) -\ln \left( \psi ^{-1}\left(
c^{-1}\right) \right)   \label{e.4.20} \\
& \geq \frac{1}{2M^{2}}\psi \left[ \left( c-\psi ^{-1}\left( c^{-1}\right)
\right) ^{2}\right] ,  \notag
\end{align}%
and%
\begin{align}
\frac{1}{2m^{2}}\psi \left[ \left( c-\psi ^{-1}\left( c^{-1}\right) \right)
^{2}\right] & \geq \psi \left( c\right) \psi \left( c^{-1}\right) -1-\psi
\left( \ln c\right) +\ln \left( \psi ^{-1}\left( c^{-1}\right) \right) 
\label{e.4.21} \\
& \geq \frac{1}{2M^{2}}\psi \left[ \left( c-\psi ^{-1}\left( c^{-1}\right)
\right) ^{2}\right]   \notag
\end{align}%
for any $c\in A$ a selfadjoint element with $\sigma \left( c\right)
\subseteq \left[ m,M\right] \subset \left( 0,\infty \right) .$

The interested reader may obtain other similar inequalities by using the
convex functions $f\left( t\right) =t\ln t,$ $t>0$ and $f\left( t\right)
=\exp \left( \alpha t\right) ,$ $t,$ $\alpha \in \mathbb{R}$ and $\alpha
\neq 0.$

\end{document}